\numberwithin{equation}{subsection}
\newtheorem{thm}{Theorem}[subsection]
\newtheorem{cor}[thm]{Corollary}
\newtheorem{lem}[thm]{Lemma}
\newtheorem{pro}[thm]{Proposition}
\newtheorem*{thm*}{Theorem}
\theoremstyle{remark}
\newtheorem{rem}[thm]{Remark}
\theoremstyle{definition}
\DeclareMathOperator{\lin}{\mbox{\sc lin}}
\DeclareMathOperator{\D}{d}
\DeclareMathOperator{\dess}{{\mathsf{Des}}}
\DeclareMathOperator{\dzii}{{\mathsf{Chi}}}
\DeclareMathOperator{\koo}{{\mathsf{root}}}
\DeclareMathOperator{\paa}{{\mathsf{par}}}
\newcommand*{\borel}[1]{{\mathfrak B}(#1)}
\newcommand*{\cbb}{\mathbb C}
\newcommand*{\cc}{\mathcal C}
\newcommand*{\des}[1]{{\dess(#1)}}
\newcommand*{\dz}[1]{{\EuScript D}(#1)}
\newcommand*{\dzi}[1]{\dzii(#1)}
\newcommand*{\dzin}[2]{\dzii^{\langle#1\rangle}(#2)}
\newcommand*{\dzn}[1]{{\EuScript D}^\infty(#1)}
\newcommand*{\ee}{\mathcal E}
\newcommand*{\escr}{{\mathscr{E}_V}}
\newcommand*{\ff}{\mathcal F}
\newcommand*{\Ge}{\geqslant}
\newcommand*{\hh}{\mathcal H}
\newcommand*{\is}[2]{\langle#1,#2\rangle}
\newcommand*{\kk}{\mathcal K}
\newcommand*{\lambdab}{{\boldsymbol\lambda}}
\newcommand*{\lambdabxx}[2]{\lambdab^{\hspace{-.3ex} \langle #1 \rangle}_{u|#2}}
\newcommand*{\lambdai}[1]{{\lambda_{#1}^{\hspace{-.3ex}{\langle} i  \rangle}}}
\newcommand*{\lambdabi}{{\lambdab^{\hspace{-.3ex}\langle i \rangle}}}
\newcommand*{\Le}{\leqslant}
\newcommand*{\mm}{\mathscr M}
\newcommand*{\mui}[1]{{\mu_{#1}^{\hspace{-.25ex}\langle i \rangle}}}
\newcommand*{\nbb}{\mathbb N}
\newcommand*{\ogr}[1]{\boldsymbol B(#1)}
\newcommand*{\pa}[1]{\paa(#1)}
\newcommand*{\quasi}[1]{\mathscr Q(#1)}
\newcommand*{\rbb}{\mathbb R}
\newcommand*{\slam}{S_{\boldsymbol \lambda}}
\newcommand*{\smalloplus}{\raise0pt\hbox{$\scriptscriptstyle \oplus$}}
\newcommand*{\sti}[1]{\mathscr S(#1)}
\newcommand*{\supp}[1]{\mathrm{supp}\,#1}
\newcommand*{\tcal}{{\mathscr T}}
\newcommand*{\varepsiloni}[1]{{\varepsilon_{#1}^{\hspace{-.1ex}\langle i \rangle}}}
\newcommand*{\xx}{\mathcal X}
\newcommand*{\zbb}{\mathbb Z}
\begin{document}
   \title[Unbounded subnormal weighted shifts on directed trees]
{Unbounded subnormal weighted shifts on directed
trees}
   \author[P.\ Budzy\'{n}ski]{Piotr Budzy\'{n}ski}
   \address{Katedra Zastosowa\'{n} Matematyki,
Uniwersytet Rolniczy w Krakowie, ul.\ Balicka 253c,
PL-30198 Krak\'ow}
   \email{piotr.budzynski@ur.krakow.pl}
   \author[Z.\ J.\ Jab{\l}o\'nski]{Zenon Jan Jab{\l}o\'nski}
\address{Instytut Matematyki, Uniwersytet Jagiello\'nski,
ul.\ \L ojasiewicza 6, PL-30348 Kra\-k\'ow, Poland}
   \email{Zenon.Jablonski@im.uj.edu.pl}
   \author[I.\ B.\ Jung]{Il Bong Jung}
   \address{Department of Mathematics, Kyungpook
National University, Daegu 702-701, Korea}
   \email{ibjung@knu.ac.kr}
   \author[J.\ Stochel]{Jan Stochel}
\address{Instytut Matematyki, Uniwersytet Jagiello\'nski,
ul.\ \L ojasiewicza 6, PL-30348 Kra\-k\'ow, Poland}
   \email{Jan.Stochel@im.uj.edu.pl}
   \thanks{Research of the first, second
and fourth authors was supported by the MNiSzW
(Ministry of Science and Higher Education) grant NN201
546438 (2010-2013). The third author was supported by
Basic Science Research Program through the National
Research Foundation of Korea (NRF) grant funded by the
Korea government (MEST) (2009-0093125).}
    \subjclass[2010]{Primary 47B20, 47B37; Secondary 44A60}
\keywords{Directed tree, weighted shift on a directed
tree, Stieltjes moment sequence, subnormal operator}
   \begin{abstract}
A new method of verifying the subnormality of
unbounded Hilbert space operators based on an
approximation technique is proposed. Diverse
sufficient conditions for subnormality of unbounded
weighted shifts on directed trees are established. An
approach to this issue via consistent systems of
probability measures is invented. The role played by
determinate Stieltjes moment sequences is elucidated.
Lambert's characterization of subnormality of bounded
operators is shown to be valid for unbounded weighted
shifts on directed trees that have sufficiently many
quasi-analytic vectors, which is a new phenomenon in
this area.
   \end{abstract}
   \maketitle
   \section{Introduction}
The theory of bounded subnormal operators was
originated by P. Halmos in \cite{hal1}. Nowadays, its
foundations are well-developed (see \cite{con2}; see
also \cite{c-f} for a recent survey article on this
subject). The theory of unbounded symmetric operators
had been established much earlier (see \cite{jvn} and
the monograph \cite{stone}). In view of Naimark's
theorem, these operators are particular cases of {\em
unbounded} subnormal operators, i.e., densely defined
operators having normal extensions in (possibly
larger) Hilbert spaces. The first general results on
unbounded subnormal operators appeared in \cite{bis}
and \cite{foi} (see also \cite{sli}). A systematic
study of this class of operators was undertaken in the
trilogy \cite{StSz3,StSz1,StSz4}. The theory of
unbounded subnormal operators has intimate connections
with other branches of mathematics and quantum physics
(see \cite{sz2,at-cha1,at-cha2} and
\cite{jor,stob,sz1,j-s}). It has been developed in two
main directions, the first is purely theoretical (cf.\
\cite{m-s,StSz2,e-v,dem2,dem3,vas3,al-vas}), the other
is related to special classes of operators (cf.\
\cite{c-j-k,kou,k-t1,k-t2}). In this paper, we will
focus mostly on the class of weighted shifts on
directed trees.

The notion of a weighted shift on a directed tree
generalizes that of a weighted shift on the $\ell^2$
space, the classical object of operator theory (see
\cite{nik,shi,ml}). In a recent paper \cite{j-j-s}
some fundamental properties of weighted shifts on
directed trees have been studied. Although
considerable progress has been made in this field, a
number of important questions have not been answered.
In this paper we continue investigations along these
lines with special emphasis put on the issue of
subnormality of unbounded operators, the case which is
essentially more complicated and not an easy extension
of the bounded one. The main difficulty comes from the
fact that the celebrated Lambert characterization of
subnormality of bounded operators (cf.\ \cite{Lam}) is
no longer valid for unbounded ones (see Section
\ref{subs1}; see also \cite{j-j-s4} for a surprising
counterexample). A new criterion (read:\ sufficient
condition) for subnormality of unbounded operators has
been invented recently in \cite{c-s-sz}. By using it,
we will show that subnormality is preserved by a
certain weak-type limit procedure (see Theorem
\ref{tw1}). This enables us to perform the
approximation process relevant to unbounded weighted
shifts on directed trees. What we get is Theorem
\ref{main}, which is the main result of this paper
(its proof depends essentially on the passage through
weighted shifts that may have zero weights). It
provides a criterion for subnormality of unbounded
weighted shifts on directed trees written in terms of
consistent systems of measures, which is new even in
the case of bounded operators. Roughly speaking, for
bounded and some unbounded operators, the assumption
that $C^\infty$-vectors generates Stieltjes moment
sequences implies subnormality (the reverse
implication is always true, cf.\ Proposition
\ref{necess-gen}). As discussed in Section
\ref{subs1}, there are unbounded formally normal
operators having dense set of $C^\infty$-vectors, for
which this is not true. It is a surprising fact that
there are non-hyponormal operators having dense set of
$C^\infty$-vectors generating Stieltjes moment
sequences. These are carefully constructed weighted
shifts on a leafless directed tree with one branching
vertex (cf.\ \cite{j-j-s4}). They do not satisfy the
consistency condition $2^\circ$ of Lemma
\ref{charsub2} and none of them has consistent system
of measures.

Under some additional assumption, the criterion for
subnormality formulated in Theorem \ref{main} becomes
a full characterization (cf.\ Theorem
\ref{necessdet2}). This is the case in the presence of
quasi-analytic vectors (cf.\ Theorem \ref{main-0}),
which is the first result of this kind (see Section
\ref{cfs} for more comments).
   \section{Preliminaries}
   \subsection{Notation and terminology}
Let $\rbb$ and $\cbb$ stand for the sets of real and
complex numbers respectively. Define
   \begin{align*}
\text{$\zbb_+ = \{0,1,2,3,\ldots\}$, $\nbb =
\{1,2,3,4,\ldots\}$ and $\rbb_+ = \{x \in \rbb \colon x \Ge
0\}$.}
   \end{align*}
We write $\borel{\rbb_+}$ for the $\sigma$-algebra of
all Borel subsets of $\rbb_+$. The closed support of a
positive Borel measure $\mu$ on $\rbb_+$ is denoted by
$\supp \mu$. We write $\delta_0$ for the Borel
probability measure on $\rbb_+$ concentrated at $0$.

Let $A$ be an operator in a complex Hilbert space
$\hh$ (all operators considered in this paper are
linear). Denote by $\dz{A}$ and $A^*$ the domain and
the adjoint of $A$ (in case it exists). Set $\dzn{A} =
\bigcap_{n=0}^\infty\dz{A^n}$; members of $\dzn{A}$
are called {\em $C^\infty$-vectors} of $A$. A linear
subspace $\ee$ of $\dz{A}$ is said to be a {\em core}
of $A$ if the graph of $A$ is contained in the closure
of the graph of the restriction $A|_{\ee}$ of $A$ to
$\ee$. If $A$ is closed, then $\ee$ is a core of $A$
if and only if $A$ coincides with the closure of
$A|_{\ee}$. A closed densely defined operator $N$ in
$\hh$ is said to be {\em normal} if $N^*N=NN^*$
(equivalently:\ $\dz{N}=\dz{N^*}$ and
$\|N^*h\|=\|Nh\|$ for all $h \in \dz{N}$). For other
facts concerning unbounded operators (including normal
ones) that are needed in this paper we refer the
reader to \cite{b-s,weid}. A densely defined operator
$S$ in $\hh$ is said to be {\em subnormal} if there
exists a complex Hilbert space $\kk$ and a normal
operator $N$ in $\kk$ such that $\hh \subseteq \kk$
(isometric embedding) and $Sh = Nh$ for all $h \in \dz
S$. It is clear that subnormal operators are closable
and their closures are subnormal.

In what follows, $\ogr \hh$ stands for the
$C^*$-algebra of all bounded operators $A$ in $\hh$
such that $\dz{A}=\hh$. We write $\lin \ff$ for the
linear span of a subset $\ff$ of $\hh$.
   \subsection{Directed trees}
Let $\tcal=(V,E)$ be a directed tree ($V$ and $E$
stand for the sets of vertices and edges of $\tcal$,
respectively). If $\tcal$ has a root, which will be
denoted by $\koo$, then we write $V^\circ:=V\setminus
\{\koo\}$; otherwise, we put $V^\circ = V$. Set $\dzi
u = \{v\in V\colon (u,v)\in E\}$ for $u \in V$. A
member of $\dzi u$ is called a {\em child} of $u$. For
every vertex $u \in V^\circ$ there exists a unique
vertex, denoted by $\pa u$, such that $(\pa u,u)\in
E$. The correspondence $u \mapsto \pa u$ is a partial
function from $V$ to $V$. For $n \in \nbb$, the
$n$-fold composition of the partial function $\paa$
with itself will be denoted by $\paa^n$. Let $\paa^0$
stand for the identity map on $V$. We call $\tcal$
{\em leafless} if $V = V^\prime$, where $V^\prime:=\{u
\in V \colon \dzi u \neq \varnothing\}$.

It is well-known that (see e.g., \cite[Proposition
2.1.2]{j-j-s}) $\dzi u \cap \dzi v = \varnothing$ for
all $u, v\in V$ such that $u \neq v$, and
   \begin{align} \label{roz}
V^\circ= \bigsqcup_{u\in V} \dzi u.
   \end{align}
(The symbol ``\,$\bigsqcup$\,'' denotes disjoint
union of sets.) For a subset $W \subseteq V$, we
put $\dzi W = \bigsqcup_{v \in W} \dzi v$ and
define $\dzin{0}{W} = W$, $\dzin{n+1}{W} =
\dzi{\dzin{n}{W}}$ for $n\in \zbb_+$ and $\des W
= \bigcup_{n=0}^\infty \dzin n W$. By induction,
we have
   \allowdisplaybreaks
   \begin{align} \label{n+1}
\dzin{n+1}{W} & = \bigcup_{v \in \dzi{W}}
\dzin{n}{\{v\}}, \quad n \in \zbb_+,
   \\
\label{chmn} \dzin{m}{\dzin{n}{W}} & =
\dzin{m+n}{W}, \quad m,n \in \zbb_+.
   \end{align}
We shall abbreviate $\dzin n {\{u\}}$ and
$\des{\{u\}}$ to $\dzin n u$ and $\des{u}$
respectively. We now state some useful properties of
the functions $\dzin{n}{\cdot}$ and $\des{\cdot}$.
   \begin{pro}
If $\tcal$ is a directed tree, then
   \begin{align}  \label{num4}
\dzin{n}{u} &= \{w \in V\colon \paa^n(w)=u\}, \quad n
\in \zbb_+,\, u \in V,
   \\
\label{dzinn2} \dzin{n+1}{u} & = \bigsqcup_{v \in
\dzi{u}} \dzin{n}{v}, \quad n \in \zbb_+,\, u \in V,
   \\
\label{num1} \dzin{n+1}{u} & = \bigsqcup_{v \in
\dzin{n}{u}} \dzi{v}, \quad n \in \zbb_+,\, u \in V,
   \\
\des u & = \bigsqcup_{n=0}^\infty \dzin n u, \quad
u\in V, \label{num3}
   \\
\des{u_1} \cap \des{u_2} & = \varnothing, \quad u_1,
u_2 \in \dzi{u},\, u_1 \neq u_2,\, u \in V.
\label{num3+}
   \end{align}
   \end{pro}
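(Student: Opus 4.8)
The plan is to establish \eqref{num4} first, by induction on $n$, and then to read off the other four identities from it. For $n = 0$ both sides of \eqref{num4} equal $\{u\}$, since $\paa^0$ is the identity. For the inductive step, observe that $w \in \dzin{n+1}{u} = \dzi{\dzin{n}{u}}$ means $w \in \dzi{v}$ for some $v \in \dzin{n}{u}$; as $w \in \dzi{v}$ says $(v,w) \in E$, the uniqueness of the parent forces $w \in V^\circ$ and $v = \pa{w}$. Hence $w \in \dzin{n+1}{u}$ if and only if $\pa{w} \in \dzin{n}{u}$, which by the induction hypothesis means $\paa^n(\pa{w}) = u$, i.e.\ $\paa^{n+1}(w) = u$. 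This yields \eqref{num4}.

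Identity \eqref{num1} is then immediate from the definition $\dzin{n+1}{u} = \dzi{\dzin{n}{u}} = \bigsqcup_{v \in \dzin{n}{u}} \dzi{v}$, where the union is disjoint because $\dzi{v_1} \cap \dzi{v_2} = \varnothing$ whenever $v_1 \neq v_2$. For \eqref{dzinn2} I would specialize the already-derived \eqref{n+1} to $W = \{u\}$, obtaining $\dzin{n+1}{u} = \bigcup_{v \in \dzi{u}} \dzin{n}{v}$, and then upgrade the union to a disjoint one: by \eqref{num4} a common element $w$ of $\dzin{n}{v_1}$ and $\dzin{n}{v_2}$ would give $v_1 = \paa^n(w) = v_2$.

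The decompositions \eqref{num3} and \eqref{num3+} rest on one consequence of the absence of circuits in $\tcal$: for fixed $w$, one has $\paa^i(w) = \paa^j(w) \Rightarrow i = j$. Indeed, if $i < j$ and $\paa^i(w) = \paa^j(w) =: x$, then $\paa^{j-i}(x) = x$ with $j - i \Ge 1$, so the vertices $x, \pa{x}, \ldots, \paa^{j-i}(x) = x$ close up into a circuit, which a directed tree does not admit. Granting this, \eqref{num3} follows from $\des{u} = \bigcup_{n=0}^\infty \dzin{n}{u}$ and \eqref{num4}, since an element of $\dzin{m}{u} \cap \dzin{n}{u}$ would satisfy $\paa^m(w) = u = \paa^n(w)$ and hence $m = n$. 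For \eqref{num3+}, suppose $w \in \des{u_1} \cap \des{u_2}$ with $u_1, u_2 \in \dzi{u}$; by \eqref{num3} and \eqref{num4} there are $m, n$ with $\paa^m(w) = u_1$ and $\paa^n(w) = u_2$, and applying $\paa$ together with $\pa{u_1} = u = \pa{u_2}$ gives $\paa^{m+1}(w) = u = \paa^{n+1}(w)$, whence $m = n$ and $u_1 = \paa^m(w) = \paa^n(w) = u_2$, contradicting $u_1 \neq u_2$.

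I expect the only genuinely non-formal point to be the no-repetition property invoked in the last paragraph; the arguments for \eqref{num4}, \eqref{num1} and \eqref{dzinn2} are pure bookkeeping with the definitions plus a single induction. The delicate part is to phrase this acyclicity property so that it powers the disjointness in both \eqref{num3} and \eqref{num3+} at once, and to confirm that the closed walk $x, \pa{x}, \ldots, x$ it produces is indeed a circuit forbidden by the definition of a directed tree.
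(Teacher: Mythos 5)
Your proof is correct and follows essentially the same route as the paper's (much terser) argument: induction on $n$ for \eqref{num4}, specialization of \eqref{n+1} plus disjointness of the sets $\dzin{n}{v}$ for \eqref{dzinn2}, the definition together with \eqref{roz} for \eqref{num1}, acyclicity for \eqref{num3}, and a deduction from \eqref{num4} and \eqref{num3} for \eqref{num3+}. You merely supply the bookkeeping the paper leaves implicit, including the one genuinely non-formal point (that $\paa^i(w)=\paa^j(w)$ forces $i=j$ in a circuit-free directed tree), which you correctly identify and justify.
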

   \begin{proof}
Equality \eqref{num4} follows by induction on $n$.
Combining \eqref{n+1} with the fact that the sets
$\dzin{n} u$, $u \in V$, are pairwise disjoint for
every fixed integer $n \Ge 0$, we get \eqref{dzinn2}.
Equality \eqref{num1} follows from the definition of
$\dzin{n+1}{u}$ and \eqref{roz}. Using the definition
of $\paa$ and the fact that $\tcal$ has no circuits, we
deduce that the sets $\dzin n u$, $n \in \zbb_+$, are
pairwise disjoint. Hence, \eqref{num3} holds. Assertion
\eqref{num3+} can be deduced from \eqref{num4} and
\eqref{num3}.
   \end{proof}
   \subsection{Weighted shifts on directed trees}
In what follows, given a directed tree $\tcal$, we
tacitly assume that $V$ and $E$ stand for the sets of
vertices and edges of $\tcal$ respectively. Denote by
$\ell^2(V)$ the Hilbert space of all square summable
complex functions on $V$ with the standard inner
product $\is fg = \sum_{u \in V} f(u)
\overline{g(u)}$. For $u \in V$, we define $e_u \in
\ell^2(V)$ to be the characteristic function of the
one-point set $\{u\}$. Then $\{e_u\}_{u\in V}$ is an
orthonormal basis of $\ell^2(V)$. Set $\escr = \lin
\{e_u\colon u \in V\}$.

For $\lambdab = \{\lambda_v\}_{v \in V^\circ}
\subseteq \cbb$, we define the operator $\slam$ in
$\ell^2(V)$ by
   \begin{align*}
   \begin{aligned}
\dz {\slam} & = \{f \in \ell^2(V) \colon
\varLambda_\tcal f \in \ell^2(V)\},
   \\
\slam f & = \varLambda_\tcal f, \quad f \in \dz
{\slam},
   \end{aligned}
   \end{align*}
where $\varLambda_\tcal$ is the mapping defined on
functions $f\colon V \to \cbb$ via
   \begin{align} \label{lamtauf}
(\varLambda_\tcal f) (v) =
   \begin{cases}
\lambda_v \cdot f\big(\pa v\big) & \text{ if } v\in
V^\circ,
   \\
0 & \text{ if } v=\koo.
   \end{cases}
   \end{align}
We call $\slam$ a {\em weighted shift} on the directed
tree $\tcal$ with weights $\lambdab=\{\lambda_v\}_{v
\in V^\circ}$.

Given a family $\{\lambda_v\}_{v \in V^\circ}
\subseteq \cbb$, we define the family
$\{\lambda_{u\mid v}\}_{u \in V, v \in \des{u}}$ by
   \begin{align}    \label{luv}
\lambda_{u\mid v} =
   \begin{cases}
1 & \text{ if } v=u,
   \\
\prod_{j=0}^{n-1} \lambda_{\paa^{j}(v)} & \text{ if }
v \in \dzin{n}{u}, \, n \Ge 1.
   \end{cases}
   \end{align}
Note that due to \eqref{num3} the above definition is
correct and \allowdisplaybreaks
   \begin{align} \label{num2}
\lambda_{u\mid w} & = \lambda_{u\mid v}\lambda_w,
\quad w \in \dzi{v}, \, v\in \des{u}, \, u \in V,
   \\
\lambda_{\pa v\mid w} & = \lambda_v \lambda_{v\mid w},
\quad v \in V^\circ, \, w\in \des v. \label{recfor2}
   \end{align}
The following lemma is a generalization of \cite[Lemma
6.1.1]{j-j-s} to the case of unbounded operators. From
now on, we adopt the convention that
$\sum_{v\in\varnothing} x_v=0$.
   \begin{lem} \label{lem4}
Let $\slam $ be a weighted shift on a directed tree
$\tcal$ with weights $\lambdab = \{\lambda_v\}_{v \in
V^\circ}$. Fix $u \in V$ and $n \in \zbb_+$. Then the
following assertions hold\/{\em :}
   \begin{enumerate}
   \item[(i)] $e_u \in \dz{\slam^n}$ if and only if
$\sum_{v \in \dzin{m}{u}} |\lambda_{u\mid v}|^2 <
\infty$ for all integers $m$ such that $1 \Le m \Le
n$,
   \item[(ii)] if $e_u \in \dz{\slam^n}$, then
$\slam^n e_u = \sum_{v \in \dzin{n}{u}} \lambda_{u\mid
v} \, e_v$,
   \item[(iii)] if $e_u \in \dz{\slam^n}$, then
$\|\slam^n e_u\|^2 = \sum_{v \in \dzin{n}{u}}
|\lambda_{u\mid v}|^2$.
   \end{enumerate}
   \end{lem}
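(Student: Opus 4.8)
The plan is to reduce both the domain condition and the formula for $\slam^n e_u$ to a single formal computation describing how the operation $\varLambda_\tcal$ shifts the index set one generation down the tree, and then to run an induction on $n$ that strips off one factor of $\slam$ at a time.

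First I would set, for each $m \in \zbb_+$, the function $f_m \colon V \to \cbb$ given by $f_m = \sum_{v \in \dzin{m}{u}} \lambda_{u\mid v}\, e_v$; this is a well-defined function on $V$ for every $m$, and it belongs to $\ell^2(V)$ exactly when $\sum_{v \in \dzin{m}{u}} |\lambda_{u\mid v}|^2 < \infty$. The key identity is $\varLambda_\tcal f_m = f_{m+1}$ for all $m \Ge 0$. To prove it I would evaluate the left-hand side pointwise: at $w = \koo$ both sides vanish, while for $w \in V^\circ$ one has $(\varLambda_\tcal f_m)(w) = \lambda_w\, f_m(\pa{w})$, which is nonzero only if $\pa{w} \in \dzin{m}{u}$. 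By \eqref{num4} the condition $\pa{w} \in \dzin{m}{u}$ is equivalent to $w \in \dzin{m+1}{u}$, and on this set \eqref{num2} (applied with parent $\pa{w} \in \dzin{m}{u} \subseteq \des{u}$, the inclusion coming from \eqref{num3}) gives $\lambda_w\, \lambda_{u\mid \pa{w}} = \lambda_{u\mid w}$. Comparing with the definition of $f_{m+1}$ closes the identity.

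With this identity available I would prove (i) and (ii) simultaneously by induction on $n$, using the domain recursion $\dz{\slam^{n+1}} = \{f \in \dz{\slam^n} \colon \slam^n f \in \dz{\slam}\}$. The base case $n = 0$ is immediate, since $\slam^0 = I$, $\dzin{0}{u} = \{u\}$ and $\lambda_{u\mid u} = 1$, so $f_0 = e_u$. For the step, the induction hypothesis says $e_u \in \dz{\slam^n}$ precisely when $f_1, \dots, f_n \in \ell^2(V)$, and in that case $\slam^n e_u = f_n$. The membership $\slam^n e_u = f_n \in \dz{\slam}$ then means $\varLambda_\tcal f_n = f_{n+1} \in \ell^2(V)$, i.e.\ $\sum_{v \in \dzin{n+1}{u}} |\lambda_{u\mid v}|^2 < \infty$; when this holds one computes $\slam^{n+1} e_u = \slam f_n = \varLambda_\tcal f_n = f_{n+1}$. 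This yields (i) and (ii) at level $n+1$, and (iii) then follows from (ii) by orthonormality of $\{e_v\}_{v \in V}$.

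The one point I would watch carefully is bookkeeping rather than analysis: assertion (i) requires finiteness of the partial sums at \emph{every} intermediate generation $1 \Le m \Le n$, not just at the top level $m = n$. This is forced by the domain recursion, since each intermediate vector $\slam^m e_u = f_m$ must itself lie in $\ell^2(V)$ before $\slam$ may be applied again, and the identity $\varLambda_\tcal f_m = f_{m+1}$ is precisely what tracks this generation by generation. The disjointness in \eqref{num3} together with the definition \eqref{luv} ensures the sums defining the $f_m$ are unambiguous, so no convergence issue beyond square-summability appears.
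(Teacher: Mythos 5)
Your proof is correct and follows essentially the same route as the paper: the paper's own argument introduces the very same auxiliary function (denoted $\lambdabxx{k}{\cdot}$ there, your $f_k$), establishes the identity $\varLambda_\tcal f_k = f_{k+1}$ by the same pointwise computation via \eqref{lamtauf}, \eqref{num4} and \eqref{num2}, and runs the same induction on the exponent, with (iii) read off from (ii) by orthonormality. The only difference is organizational — you isolate the shift identity as a separate step before the induction, while the paper carries it out inside the induction step — which does not change the substance.
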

   \begin{proof} For $k \in \zbb_+$,
we define the complex function $\lambdabxx{k}{\cdot}$
on $V$ by
   \begin{align}  \label{deflam}
\lambdabxx{k}{v} =
   \begin{cases}
\lambda_{u|v} & \text{ if } v \in \dzin{k}{u},
   \\
0 & \text{ if } v \in V \setminus \dzin{k}{u}.
   \end{cases}
   \end{align}
We shall prove that for every $k\in \zbb_+$, the
following two conditions hold
   \begin{gather}  \label{small}
e_u \in \dz{\slam^k} \text{ if and only if } \sum_{v
\in \dzin{m}{u}} |\lambda_{u\mid v}|^2 < \infty \text{
for } m = 0,1, \ldots, k,
   \\
   \label{small2} \text{if } e_u \in \dz{\slam^k},
\text{ then } \slam^k e_u = \lambdabxx{k}{\cdot}.
   \end{gather}
We use an induction on $k$. The case of $k=0$ is
obvious. Suppose that \eqref{small} and \eqref{small2}
hold for all nonnegative integers less than or equal
to $k$. Assume that $e_u \in \dz{\slam^k}$. Now we
compute $\varLambda_\tcal (\slam^k e_u)$. It follows
from the induction hypothesis and \eqref{deflam} that
   \allowdisplaybreaks
   \begin{align*}
(\varLambda_\tcal (\slam^k e_u))(v)
&\overset{\eqref{lamtauf}}=
   \begin{cases}
\lambda_v (\slam^k e_u)(\paa(v)) & \text{ if } v \in
V^\circ,
   \\
0 & \text{ if } v=\koo,
   \end{cases}
   \\
&\overset{\eqref{small2}}=
   \begin{cases}
\lambda_v \lambdabxx{k}{\paa(v)} & \text{ if } \paa(v)
\in \dzin{k}{u},
   \\
0 & \text{ otherwise,}
   \end{cases}
   \\
& \overset{\eqref{num4}}= \begin{cases} \lambda_v
\lambda_{u|\paa(v)} & \text{ if } v \in \dzin{k+1}{u},
   \\
0 & \text{ otherwise,}
   \end{cases}
   \\
& \overset{\eqref{num2}}= \lambdabxx{k+1}{v}, \quad v
\in V,
  \end{align*}
which shows that $\varLambda_\tcal (\slam^k e_u) =
\lambdabxx{k+1}{\cdot}$. This in turn implies that
\eqref{small} and \eqref{small2} hold for $k+1$ in
place of $k$. This proves (i) and (ii). Assertion
(iii) is a direct consequence of (ii).
   \end{proof}
The following result is an essential ingredient of the
proof of Theorem \ref{main}.
   \begin{pro}\label{potegi}
If $\lambdabi =\big\{\lambdai{v}\big\}_{v \in
V^\circ}$, $i=1,2,3, \ldots$, and
$\lambdab=\{\lambda_v\}_{v\in V^\circ}$ are families
of complex numbers such that
   \begin{enumerate}
   \item[(i)] $\escr \subseteq \dzn{\slam}
\cap \bigcap_{i=1}^\infty \dzn{S_{\lambdabi}}$,
   \item[(ii)] $\lim_{i \to \infty} \lambdai{v} =
\lambda_v$ for all $v \in V^\circ$,
   \item[(iii)] $\lim_{i \to \infty}
\|S_{\lambdabi}^n e_u\| = \|\slam^n e_u\|$ for all $n
\in \zbb_+$ and $u \in V$,
   \end{enumerate}
then
   \begin{align}  \label{slim+}
\is{\slam^m e_u}{\slam^n e_v} = \lim_{i \to \infty}
\is{S_{\lambdabi}^m e_u}{S_{\lambdabi}^n e_v}, \quad
u,v \in V, \, m,n \in \zbb_+.
   \end{align}
   \end{pro}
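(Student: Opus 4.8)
The plan is to deduce \eqref{slim+} from \emph{strong} convergence in $\ell^2(V)$ of the relevant sequences of vectors, after which the inner products converge by joint continuity of the pairing. By hypothesis (i) each $e_u$ belongs to $\dzn{\slam}$ and to every $\dzn{S_{\lambdabi}}$, so Lemma \ref{lem4}(ii) is applicable throughout; I accordingly fix $u,v\in V$ and $m,n\in\zbb_+$ and set $f_i=S_{\lambdabi}^m e_u$, $f=\slam^m e_u$, $g_i=S_{\lambdabi}^n e_v$ and $g=\slam^n e_v$, with the expansions $f_i=\sum_{w\in\dzin{m}{u}}\lambda^{\langle i\rangle}_{u|w}\,e_w$ and $f=\sum_{w\in\dzin{m}{u}}\lambda_{u|w}\,e_w$ (and similarly for $g_i,g$ over $\dzin{n}{v}$).

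First I would establish coordinatewise convergence $\is{f_i}{e_w}\to\is{f}{e_w}$ for every $w\in V$. If $w\notin\dzin{m}{u}$ both coordinates vanish; if $w\in\dzin{m}{u}$ and $m\Ge 1$, then by \eqref{luv} the coordinate $\lambda^{\langle i\rangle}_{u|w}=\prod_{j=0}^{m-1}\lambdai{\paa^{j}(w)}$ is a product of exactly $m$ factors, so hypothesis (ii) and the continuity of a finite product give $\lambda^{\langle i\rangle}_{u|w}\to\lambda_{u|w}$ (the case $m=0$ being trivial). Since $\|f_i\|\to\|f\|<\infty$ by hypothesis (iii), the sequence $(\|f_i\|)_i$ is bounded, and coordinatewise convergence against a bounded sequence upgrades to weak convergence $f_i\rightharpoonup f$ in $\ell^2(V)$: one approximates an arbitrary $h\in\ell^2(V)$ by a finitely supported vector and controls the tail with the uniform norm bound.

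Next I would invoke the Radon--Riesz property of Hilbert space. Weak convergence $f_i\rightharpoonup f$ together with the norm convergence $\|f_i\|\to\|f\|$ from hypothesis (iii) forces strong convergence, since $\|f_i-f\|^2=\|f_i\|^2-2\,\mathrm{Re}\,\is{f_i}{f}+\|f\|^2\to 0$. The identical argument yields $g_i\to g$ strongly. Strong convergence of both factors then promotes to convergence of the pairing via
\[
|\is{f_i}{g_i}-\is{f}{g}|\Le\|f_i-f\|\,\|g_i\|+\|f\|\,\|g_i-g\|\longrightarrow 0,
\]
which is precisely \eqref{slim+}.

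I expect the main obstacle to be exactly this passage from pointwise data to inner-product convergence, i.e.\ the interchange of the limit $i\to\infty$ with the (possibly infinite) sum $\is{S_{\lambdabi}^m e_u}{S_{\lambdabi}^n e_v}=\sum_{w}\lambda^{\langle i\rangle}_{u|w}\,\overline{\lambda^{\langle i\rangle}_{v|w}}$ ranging over $\dzin{m}{u}\cap\dzin{n}{v}$. Hypothesis (ii) by itself yields only weak convergence of the factors, and the inner product is not jointly continuous for the weak topology; it is the norm convergence furnished by hypothesis (iii) that promotes weak to strong convergence and thereby legitimizes the interchange. Thus all three hypotheses are used, with (iii) carrying the analytic weight.
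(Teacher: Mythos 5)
Your proof is correct, and it takes a genuinely different route from the paper's. The paper first proves a structural formula (its \eqref{smsn}): using the tree identities \eqref{num4}, \eqref{chmn} and \eqref{luv} it shows that when $\dzin{m}{u}\cap\dzin{n}{v}\neq\varnothing$ and $m\Le n$ this intersection is all of $\dzin{m}{u}$ and the weights factor as $\lambda_{v\mid u'}=\lambda_{u\mid u'}\lambda_{v\mid u}$, so that $\is{\slam^m e_u}{\slam^n e_v}=\overline{\lambda_{v\mid u}}\,\|\slam^m e_u\|^2$ (and $0$ when the intersection is empty, a condition independent of the weights); passing to the limit then needs only (ii) for the single factor $\lambdai{v\mid u}$ and (iii) for the norm. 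You instead bypass the tree combinatorics with a soft functional-analytic argument: coordinatewise convergence of the expansions (from (ii), via finiteness of the products in \eqref{luv}) plus the uniform norm bound (from (iii)) gives weak convergence, and the Radon--Riesz identity upgrades this to norm convergence $S_{\lambdabi}^m e_u\to\slam^m e_u$, whence the inner products converge by bilinearity. Every step is standard and correctly justified. What the paper's route buys is the explicit closed form \eqref{smsn}; what yours buys is a stronger conclusion with less structure-specific work: you show that (i)--(iii) already force strong convergence $S_{\lambdabi}^n e_u\to\slam^n e_u$, a fact the paper establishes separately, and only for the particular approximants \eqref{wzd1}--\eqref{wzd3}, in the remark following Theorem \ref{main} (its \eqref{slim2}), by a more laborious direct computation. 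One small slip in your closing commentary: hypothesis (ii) alone yields only coordinatewise convergence, not weak convergence --- the uniform bound coming from (iii) is already needed for the latter --- but your actual argument handles this point correctly.
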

   \begin{proof} We split the proof into two steps.

{\sc Step 1.} If $\lambdab=\{\lambda_v\}_{v\in
V^\circ}$ is a family of complex numbers such that
$\escr \subseteq \dzn{\slam}$, then for all $m,n \in
\zbb_+$ and $u,v \in V$,
   \begin{align} \label{smsn}
\is{\slam^m e_u}{\slam^n e_v} =
   \begin{cases}
0 & \text{ if } \cc^{m,n}(u,v) = \varnothing,
   \\
\overline{\lambda_{v|u}} \, \|\slam^m e_u\|^2 & \text{
if } \cc^{m,n}(u,v) \neq \varnothing \text{ and } m\Le
n,
   \\
\lambda_{u|v} \, \|\slam^n e_v\|^2 & \text{ if }
\cc^{m,n}(u,v) \neq \varnothing \text{ and } m >n,
   \end{cases}
   \end{align}
where $\cc^{m,n}(u,v) := \dzin{m}{u} \cap
\dzin{n}{v}$.

Indeed, it follows from Lemma \ref{lem4} that
   \begin{align}
   \begin{aligned}  \label{slmsln}
\is{\slam^m e_u}{\slam^n e_v} & =
\Big\langle\sum_{u^\prime \in \dzin{m}{u}}
\lambda_{u\mid u^\prime} \, e_{u^\prime},
\sum_{v^\prime \in \dzin{n}{v}} \lambda_{v\mid
v^\prime} \, e_{v^\prime}\Big\rangle
   \\
& = \sum_{u^\prime \in \cc^{m,n}(u,v)} \lambda_{u\mid
u^\prime} \overline{\lambda_{v\mid u^\prime}}.
   \end{aligned}
   \end{align}
Hence, if $\cc^{m,n}(u,v) = \varnothing$, then the
left-hand side of \eqref{smsn} is equal to $0$ as
required. Suppose now that $\cc^{m,n}(u,v) \neq
\varnothing$ and $m \Le n$. Then
   \begin{align} \label{num8}
\cc^{m,n}(u,v)=\dzin{m}{u}.
   \end{align}
To show this, take $w \in \cc^{m,n}(u,v)$. Then, by
\eqref{num4}, $u=\paa^m(w)$ and
   \begin{align*}
v = \paa^{n}(w) = \paa^{n-m}(\paa^m(w)) =
\paa^{n-m}(u),
   \end{align*}
which, by \eqref{num4} again, is equivalent to
   \begin{align} \label{num7}
u \in \dzin{n-m}{v}.
   \end{align}
This implies that
   \begin{align} \label{num6}
\dzin{m}{u} \subseteq \dzin{m}{\dzin{n-m}{v}}
\overset{\eqref{chmn}}{=} \dzin{n}{v}.
   \end{align}
Thus \eqref{num8} holds. Next, we show that
   \begin{align} \label{num5}
\lambda_{v\mid u^\prime} = \lambda_{u\mid u^\prime}
\lambda_{v|u}, \quad u^\prime \in \dzin{m}{u}.
   \end{align}
It is enough to consider the case where $m\Ge 1$ and
$n > m$. Since $u^\prime \in \dzin{m}{u}$, we infer
from \eqref{num6} that $u^\prime \in \dzin{n}{v}$.
Moreover, by \eqref{num7}, $u \in \dzin{n-m}{v}$. All
these facts together with \eqref{luv} imply that
   \begin{multline*}
\lambda_{v|u^\prime} = \prod_{j=0}^{n-1}
\lambda_{\paa^{j}(u^\prime)} = \prod_{j=0}^{m-1}
\lambda_{\paa^{j}(u^\prime)} \prod_{j=m}^{n-1}
\lambda_{\paa^{j}(u^\prime)}
   \\
\overset{\eqref{luv}}= \lambda_{u\mid u^\prime}
\prod_{j=0}^{n-m-1}
\lambda_{\paa^{j}(\paa^{m}(u^\prime))}
\overset{\eqref{num4}}= \lambda_{u\mid u^\prime}
\prod_{j=0}^{n-m-1} \lambda_{\paa^{j}(u)}
\overset{\eqref{luv}}= \lambda_{u\mid u^\prime}
\lambda_{v|u},
   \end{multline*}
which completes the proof of \eqref{num5}. Now
applying \eqref{slmsln}, \eqref{num8}, \eqref{num5}
and Lemma \ref{lem4}\,(iii), we obtain
\allowdisplaybreaks
   \begin{align*}
\is{\slam^m e_u}{\slam^n e_v} & = \sum_{u^\prime \in
\dzin{m}{u}} \lambda_{u\mid u^\prime}
\overline{\lambda_{v\mid u^\prime}}
   \\
& \hspace{-2.2ex} \overset{\eqref{num5}}=
\overline{\lambda_{v|u}} \sum_{u^\prime \in
\dzin{m}{u}} |\lambda_{u\mid u^\prime}|^2 =
\overline{\lambda_{v|u}} \, \|\slam^m e_u\|^2.
   \end{align*}
Taking the complex conjugate and making appropriate
substitutions, we infer from the above that
$\is{\slam^m e_u}{\slam^n e_v} = \lambda_{u|v} \,
\|\slam^n e_v\|^2$ if $\cc^{m,n}(u,v) \neq
\varnothing$ and $m
>n$, which completes the proof of Step 1.

{\sc Step 2.} Under the assumptions of Proposition
\ref{potegi}, equality \eqref{slim+} holds.

Indeed, it follows from (ii) that
   \begin{align} \label{wzj+}
\lim_{i \to \infty} \lambdai{u\mid v} = \lambda_{u\mid
v}, \quad u \in V, v \in \des{u},
   \end{align}
where $\{\lambdai{u\mid v}\}_{u \in V, v \in \des{u}}$
is the family related to $\big\{\lambdai{v}\big\}_{v
\in V^\circ}$ via \eqref{luv}. Now, applying Step 1 to
the operators $S_{\lambdabi}$ and $\slam$ (which is
possible due to (i)) and using \eqref{wzj+} and (iii),
we obtain \eqref{slim+}.
   \end{proof}
   \subsection{Backward extensions of Stieltjes moment sequences}
We say that a sequence $\{t_n\}_{n=0}^\infty$ of real
numbers is a {\em Stieltjes moment sequence} if there
exists a positive Borel measure $\mu$ on $\rbb_+$ such
that
   \begin{align*}
t_{n}=\int_0^\infty s^n \D\mu(s),\quad n\in \zbb_+,
   \end{align*}
where $\int_0^\infty$ means integration over the set
$\rbb_+$; $\mu$ is called a {\em representing measure}
of $\{t_n\}_{n=0}^\infty$. A Stieltjes moment sequence
is said to be {\em determinate} if it has only one
representing measure. By the Stieltjes theorem (cf.\
\cite[Theorem~ 1.3]{sh-tam} or \cite[Theorem
6.2.5]{ber}), a sequence $\{t_n\}_{n=0}^\infty
\subseteq \rbb$ is a Stieltjes moment sequence if and
only if the sequences $\{t_n\}_{n=0}^\infty$ and
$\{t_{n+1}\}_{n=0}^\infty$ are positive definite
(recall that a sequence $\{t_n\}_{n=0}^\infty
\subseteq \rbb$ is said to be {\em positive definite}
if $\sum_{k,l=0}^n t_{k+l} \alpha_k
\overline{\alpha_l} \Ge 0$ for all $\alpha_0,\ldots,
\alpha_n \in \cbb$ and $n \in \zbb_+$). It is clear
from the definition that
   \begin{align}  \label{st+1}
\text{if $\{t_n\}_{n=0}^\infty$ is a Stieltjes moment
sequence, then so is $\{t_{n+1}\}_{n=0}^\infty$.}
   \end{align}
The converse is not true in general. Moreover, if
$\{t_n\}_{n=0}^\infty$ is an indeterminate Stieltjes
moment sequence, then so is $\{t_{n+1}\}_{n=0}^\infty$
(see Lemma \ref{bext} or \cite[Proposition
5.12]{sim}). The converse implication fails to hold
(cf.\ \cite[Corollary 4.21]{sim}; see also
\cite{j-j-s4}).

The question of backward extendibility of Hamburger
moment sequences has well-known solutions (see e.g.,
\cite{wri} and \cite{sz}). Below, we formulate a
solution of a variant of this question for Stieltjes
moment sequences (see \cite[Lemma 6.1.2]{j-j-s} for
the special case of compactly supported representing
measures).
   \begin{lem} \label{bext}
Let $\{t_n\}_{n=0}^\infty$ be a Stieltjes moment
sequence and let $\vartheta$ be a positive real
number. Set $t_{-1}=\vartheta$. Then the following are
equivalent{\em :}
   \begin{enumerate}
   \item[(i)] $\{t_{n-1}\}_{n=0}^\infty$ is a
Stieltjes moment sequence,
   \item[(ii)]  $\{t_{n-1}\}_{n=0}^\infty$ is positive
definite,
   \item[(iii)] there is a representing measure $\mu$
of $\{t_n\}_{n=0}^\infty$ such
that\/\footnote{\;\label{foot}We adhere to the
convention that $\frac 1 0 := \infty$. Hence,
$\int_0^\infty \frac 1 s \D \mu(s) < \infty$ implies
$\mu(\{0\})=0$.} $\int_0^\infty \frac 1 s \D \mu(s)
\Le \vartheta$.
   \end{enumerate}
Moreover, if {\em (i)} holds, then the mapping
$\mm_0(\vartheta) \ni \mu \to \nu_{\mu} \in
\mm_{-1}(\vartheta)$ defined by
   \begin{align}   \label{nu}
\nu_{\mu}(\sigma) = \int_\sigma \frac 1 s \D \mu(s) +
\Big(\vartheta - \int_0^\infty \frac 1 s \D
\mu(s)\Big) \delta_0(\sigma), \quad \sigma \in
\borel{\rbb_+},
   \end{align}
is a bijection with the inverse $ \mm_{-1}(\vartheta)
\ni \nu \to \mu_{\nu} \in\mm_0(\vartheta)$ given by
   \begin{align} \label{mu}
\mu_{\nu} ( \sigma) = \int_\sigma s \D \nu (s), \quad
\sigma \in \borel{\rbb_+},
   \end{align}
where $\mm_0(\vartheta)$ stands for the set of all
representing measures $\mu$ of $\{t_n\}_{n=0}^\infty$
such that $\int_0^\infty \frac 1 s \D \mu(s) \Le
\vartheta$, and $\mm_{-1}(\vartheta)$ for the set of
all representing measures $\nu$ of
$\{t_{n-1}\}_{n=0}^\infty$. In particular,
$\nu_{\mu}(\{0\})=0$ if and only if $\int_0^\infty
\frac 1 s \D \mu(s)=\vartheta$.

If {\em (i)} holds and $\{t_n\}_{n=0}^\infty$ is
determinate, then $\{t_{n-1}\}_{n=0}^\infty$ is
determinate, the unique representing measure $\mu$ of
$\{t_n\}_{n=0}^\infty$ satisfies the inequality
$\int_0^\infty \frac 1 s \D \mu(s) \Le \vartheta$, and
$\nu_{\mu}$ is the unique representing measure of
$\{t_{n-1}\}_{n=0}^\infty$.
   \end{lem}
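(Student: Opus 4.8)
The plan is to settle the equivalence (i)$\Leftrightarrow$(ii) by a direct appeal to the Stieltjes theorem, and then to prove (i)$\Leftrightarrow$(iii) simultaneously with the bijection by writing down the two maps and verifying they are mutually inverse. For the first equivalence I would apply the Stieltjes theorem quoted above to the shifted sequence $\{t_{n-1}\}_{n=0}^\infty$: it is a Stieltjes moment sequence precisely when both $\{t_{n-1}\}_{n=0}^\infty$ and $\{t_n\}_{n=0}^\infty$ are positive definite. Since $\{t_n\}_{n=0}^\infty$ is Stieltjes, hence positive definite, this reduces to the positive definiteness of $\{t_{n-1}\}_{n=0}^\infty$ alone, which is exactly (ii).

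For the remaining equivalence and the bijection I would argue by moment computations. Given a representing measure $\nu$ of $\{t_{n-1}\}_{n=0}^\infty$, which exists under (i), set $\D\mu_\nu=s\,\D\nu$ as in \eqref{mu}; then $\int_0^\infty s^m\,\D\mu_\nu(s)=\int_0^\infty s^{m+1}\,\D\nu(s)=t_m$ shows $\mu_\nu$ represents $\{t_n\}_{n=0}^\infty$, and since $\mu_\nu(\{0\})=0$ one gets $\int_0^\infty\frac1s\,\D\mu_\nu(s)=\nu(\rbb_+\setminus\{0\})=\vartheta-\nu(\{0\})\Le\vartheta$, so $\mu_\nu\in\mm_0(\vartheta)$; in particular (i)$\Rightarrow$(iii). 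Conversely, given $\mu$ as in (iii), the formula \eqref{nu} adds a compensating atom at $0$; substituting $s^n$ (total mass $\vartheta$ when $n=0$, and $0^n=0$ killing the atom when $n\Ge1$) gives $\int_0^\infty s^n\,\D\nu_\mu(s)=t_{n-1}$ for all $n\in\zbb_+$, so $\nu_\mu\in\mm_{-1}(\vartheta)$ and (i) holds.

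To finish the bijection I would check $\mu_{\nu_\mu}=\mu$ and $\nu_{\mu_\nu}=\nu$ by substitution. Here the governing point is the convention $\frac10:=\infty$, which forces $\mu(\{0\})=0$ for every $\mu\in\mm_0(\vartheta)$; then $\int_\sigma s\,\D\nu_\mu(s)=\mu(\sigma\setminus\{0\})=\mu(\sigma)$, and the identity $\nu(\sigma\cap\{0\})=\nu(\{0\})\delta_0(\sigma)$ makes the atom restored by \eqref{nu} cancel exactly the atom stripped by \eqref{mu}, yielding $\nu_{\mu_\nu}=\nu$. Reading $\nu_\mu(\{0\})=\vartheta-\int_0^\infty\frac1s\,\D\mu(s)$ off \eqref{nu} (again using $\mu(\{0\})=0$) gives the ``in particular'' clause. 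I expect this bookkeeping of the atom at $0$, together with the consistent use of the $\frac10:=\infty$ convention, to be the only delicate point; the moment identities themselves are routine substitutions.

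For the determinacy assertion, assume (i) holds and $\{t_n\}_{n=0}^\infty$ is determinate with unique representing measure $\mu$. Since $\mm_0(\vartheta)$ consists of representing measures of $\{t_n\}_{n=0}^\infty$, it is contained in $\{\mu\}$, while (i)$\Rightarrow$(iii) shows $\mm_0(\vartheta)\neq\varnothing$; hence $\mm_0(\vartheta)=\{\mu\}$, which forces $\int_0^\infty\frac1s\,\D\mu(s)\Le\vartheta$. Transporting this singleton through the bijection gives $\mm_{-1}(\vartheta)=\{\nu_\mu\}$, and since $\mm_{-1}(\vartheta)$ is by definition the set of all representing measures of $\{t_{n-1}\}_{n=0}^\infty$, that sequence is determinate with unique representing measure $\nu_\mu$.
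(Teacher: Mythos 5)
Your proposal is correct and follows essentially the same route as the paper's proof: (i)$\Leftrightarrow$(ii) via the Stieltjes theorem, the two explicit maps \eqref{nu} and \eqref{mu} with the same moment computations and the same bookkeeping of the atom at $0$ under the $\frac{1}{0}:=\infty$ convention, and determinacy transported through the bijection. The only cosmetic difference is that you verify both compositions are identities, whereas the paper establishes surjectivity and injectivity separately.
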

   \begin{proof}
Equivalence (i)$\Leftrightarrow$(ii) follows from the
Stieltjes theorem.

(iii)$\Rightarrow$(i) Clearly, if $\mu \in
\mm_0(\vartheta)$, then $t_{n-1}= \int_0^\infty s^n \D
\nu_{\mu}(s)$ for all $n \in \zbb_+$, which means that
$\{t_{n-1}\}_{n=0}^\infty$ is a Stieltjes moment sequence
and $\nu_{\mu} \in \mm_{-1}(\vartheta)$.

(i)$\Rightarrow$(iii) Take $\nu \in \mm_{-1}(\vartheta)$.
Setting $\mu:=\mu_{\nu}$ (cf.\ \eqref{mu}), we see that
   \begin{align}  \label{tnrep}
t_n = t_{(n+1)-1} = \int_0^\infty s^n s\D \nu(s) =
\int_0^\infty s^n \D \mu(s), \quad n \in \zbb_+.
   \end{align}
It is clear that $\mu(\{0\})=0$ and thus
   \begin{align*}
\int_0^\infty \frac 1 s \D \mu(s) & =
\int_{(0,\infty)} \D \nu(s) = \nu((0,\infty))
   \\
   & = \int_{[0,\infty)} s^0 \D \nu(s) - \nu(\{0\}) =
\vartheta - \nu(\{0\}),
   \end{align*}
which implies that $\int_0^\infty \frac 1 s \D \mu(s)
\Le \vartheta$. This, combined with \eqref{tnrep},
shows that $\mu \in \mm_0(\vartheta)$. Since
$\nu(\rbb_+)=\vartheta$, we deduce from \eqref{nu} and
the definition of $\mu$ that
   \allowdisplaybreaks
   \begin{align*}
\nu_{\mu}(\sigma) &= \int_{\sigma\setminus \{0\}}
\frac 1 s \D \mu(s) + \Big(\vartheta-\int_0^\infty
\frac 1 s \D \mu(s)\Big) \delta_0(\sigma \cap \{0\})
   \\
&= \nu(\sigma\setminus \{0\}) + \Big(\vartheta-
\nu((0, \infty))\Big) \delta_0(\sigma \cap \{0\})
   \\
&= \nu(\sigma\setminus \{0\}) + \nu(\{0\})
\delta_0(\sigma \cap \{0\}) = \nu(\sigma), \quad
\sigma \in \borel{\rbb_+},
   \end{align*}
which yields $\nu_{\mu} = \nu$.

We have proved that, under the assumption (i), the
mapping $\mm_0(\vartheta) \ni \mu \to \nu_{\mu} \in
\mm_{-1}(\vartheta)$ is well-defined and surjective.
Its injectivity follows from the equality
   \begin{align*}
\mu(\sigma) = \mu(\sigma \setminus \{0\}) =
\int_{\sigma \setminus \{0\}} s \D \nu_{\mu}(s), \quad
\sigma \in \borel{\rbb_+}, \, \mu \in \mm_0(\vartheta).
   \end{align*}
This yields the determinacy part of the conclusion.
   \end{proof}
   \begin{rem}
Suppose that $\{t_n\}_{n=0}^\infty$ is a determinate
Stieltjes moment sequence with a representing measure
$\mu$. If $\int_0^\infty \frac 1 s \D \mu(s) =
\infty$, then $(\vartheta, t_0, t_1, \ldots)$ is never
a Stieltjes moment sequence. In turn, if
$\int_0^\infty \frac 1 s \D \mu(s) < \infty$, then
$(\vartheta, t_0, t_1, \ldots)$ is a determinate
Stieltjes moment sequence if $\vartheta \Ge
\int_0^\infty \frac 1 s \D \mu(s)$, and it is not a
Stieltjes moment sequence if $\vartheta <
\int_0^\infty \frac 1 s \D \mu(s)$.
   \end{rem}
   \begin{rem}
Under the assumptions of Lemma \ref{bext}, if
$\{t_{n-1}\}_{n=0}^\infty$ is a Stieltjes moment
sequence and $t_0 > 0$, then $t_n > 0$ for all $n
\in \zbb_+$ and
   \begin{align*}
\sup_{n \in \zbb_+}\frac{t_n^2}{t_{2n+1}} \Le
\int_0^\infty \frac{1}{s} \D \mu(s) \Le \vartheta,
\quad \mu \in \mm_0(\vartheta).
   \end{align*}
Indeed, since $t_0 > 0$ and $\mu(\{0\})=0$, we see
that $t_n > 0$ for all $n \in \zbb_+$. Thus
   \begin{align*}
t_n^2 = \Big(\int_{(0,\infty)}
s^{-\nicefrac12}s^{n+\nicefrac12} \D \mu(s)\Big)^2 \Le
\int_0^\infty \frac{1}{s} \D\mu(s) \int_0^\infty
s^{2n+1} \D\mu(s), \quad n \in \zbb_+.
   \end{align*}
Note that if $\{t_n\}_{n=0}^\infty$ is
indeterminate, then there is a smallest
$\vartheta$ for which the sequence
$\{t_{n-1}\}_{n=0}^\infty$ is a Stieltjes moment
sequence (see \cite{j-j-s4} for more details).
   \end{rem}
   \section{A General Setting for Subnormality}
   \subsection{Criteria for subnormality}
The only known general characterization of
subnormality of unbounded Hilbert space operators is
due to Bishop and Foia\c{s} (cf.\ \cite{bis,foi}; see
also \cite{FHSz} for a new approach via sesquilinear
selection of elementary spectral measures). Since this
characterization refers to semispectral measures (or
elementary spectral measures), it seems to be useless
in the context of weighted shifts on directed trees.
The other known criteria for subnormality require the
operator in question to have an invariant domain (with
the exception of \cite{sz4}). Since a closed subnormal
operator with an invariant domain is automatically
bounded (cf.\ \cite[Theorem 3.3]{ota}) and a weighted
shift operator $\slam$ on a directed tree is always
closed (cf.\ \cite[Proposition 3.1.2]{j-j-s}), we have
to find a smaller subspace of $\dz{\slam}$ which is an
invariant core of $\slam$. This will enable us to
apply the aforesaid criteria for subnormality of
operators with invariant domains in the context of
weighted shift operators on directed trees.

Using a recent result from \cite{c-s-sz}, we obtain
the following criterion for subnormality which is a
key tool for proving Theorem \ref{main}.
   \begin{thm} \label{tw1}
Let $\{S_{\omega}\}_{\omega \in \varOmega}$ be a net
of subnormal operators in a complex Hilbert space
$\hh$ and let $S$ be a densely defined operator in
$\hh$. Suppose that there is a subset $\xx$ of $\hh$
such that
   \begin{enumerate}
   \item[(i)] $\xx \subseteq \dzn{S} \cap
\bigcap_{\omega \in \varOmega}\dzn{S_{\omega}}$,
   \item[(ii)] $\ff := \lin \bigcup_{n=0}^\infty
S^n(\xx)$ is a core of $S$,
   \item[(iii)]  $\is{S^m x}{S^n y} =
\lim_{\omega \in \varOmega} \is{S_{\omega}^m
x}{S_{\omega}^n y}$ for all $x,y \in \xx$ and $m,n \in
\zbb_+$.
   \end{enumerate}
Then $S$ is subnormal.
   \end{thm}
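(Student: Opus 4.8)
The theorem asserts that subnormality survives a weak-type limit, and the tool we are given is the criterion from \cite{c-s-sz}. My plan is therefore to reduce the statement to the hypotheses of that criterion. The essential point is that subnormality of a densely defined operator $S$ with an invariant core can be detected by the moment structure of the sesquilinear forms $\is{S^m x}{S^n y}$ on a generating set $\xx$: if these forms arise as limits of the corresponding forms for genuinely subnormal operators $S_{\omega}$, then the limiting form must itself satisfy the positive-definiteness conditions characterizing subnormality. So the first thing I would do is recall the precise formulation of the criterion of \cite{c-s-sz}, which (for an operator with an invariant core $\ff$) requires that for every finite family of vectors in $\ff$ the associated moment matrices built from $\is{S^{k+l}\,\cdot}{\,\cdot}$ be positive definite, or more precisely that a certain family of kernels indexed by pairs of exponents be of positive type.

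\begin{proof}[Plan of proof]
The plan is to verify the hypotheses of the subnormality criterion of \cite{c-s-sz} for $S$, transferring the required positivity from the operators $S_{\omega}$ via the limit in (iii). First I would observe that, by (i) and (ii), the linear space $\ff = \lin \bigcup_{n=0}^\infty S^n(\xx)$ is an invariant core of $S$ on which all powers of $S$ act; the same space sits inside $\dz{S_{\omega}^n}$ for every $n$ and every $\omega$, again by (i), so that each $S_{\omega}$ also acts on $\ff$ with all its powers. The criterion from \cite{c-s-sz} characterizes subnormality of an operator with invariant core in terms of the positive definiteness of the Gram-type kernels assembled from the inner products $\is{S^{j}f}{S^{k}g}$ for $f,g$ ranging over a generating family and $j,k \in \zbb_+$. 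Since each $S_{\omega}$ is subnormal, these kernels built from $S_{\omega}$ are of positive type.

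Next I would pass to the limit. By (iii) the kernel entries $\is{S^m x}{S^n y}$ equal $\lim_{\omega}\is{S_{\omega}^m x}{S_{\omega}^n y}$ for all $x,y \in \xx$; since positivity of a Hermitian form is preserved under pointwise limits of its matrix entries, every finite moment matrix associated with $S$ on $\xx$ inherits positive definiteness from the subnormal operators $S_{\omega}$. To upgrade this from the generating set $\xx$ to all of $\ff$, I would use bilinearity: a general element of $\ff$ is a finite linear combination $\sum_c \, c\, S^{p(c)} x_c$ with $x_c \in \xx$, and the inner product $\is{S^m f}{S^n g}$ expands into a finite sum of terms $\is{S^{m+p} x}{S^{n+q} y}$, each of which is a limit of the corresponding $S_{\omega}$-quantity by (iii). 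Thus the full positivity condition of the criterion, phrased over $\ff$, follows by the same limiting argument applied entrywise, and subnormality of $S$ follows.

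The main obstacle I anticipate is matching the limit hypothesis (iii), which is stated only for $x,y \in \xx$, to the form of the positivity condition demanded by \cite{c-s-sz}, which is most naturally phrased over the invariant core $\ff$. The reduction from $\ff$ back to $\xx$ is the crux: it must use that $S$ and each $S_{\omega}$ commute with the substitution $f = S^p x$ in the sense that $\is{S^m S^p x}{S^n S^q y} = \is{S^{m+p} x}{S^{n+q} y}$, an identity valid on the invariant core, so that the exponents simply add and (iii) applies to the shifted pair $(m+p,n+q)$. One must check that this absorption of the inner powers into the exponents is legitimate for the operators $S_{\omega}$ as well, which is where hypothesis (i) (guaranteeing $\xx \subseteq \dzn{S_\omega}$, hence that all the relevant powers are defined) is indispensable. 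Once that bookkeeping is in place, the positivity transfers term by term and no genuinely analytic difficulty remains; the force of the argument is entirely in the criterion of \cite{c-s-sz} together with the stability of positive-definiteness under limits.
\end{proof}
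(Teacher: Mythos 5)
Your overall strategy is exactly the paper's: restrict $S$ to the invariant core $\ff$, verify the positivity condition of the criterion of \cite{c-s-sz} by expanding $\is{S^p f_i}{S^q f_j}$ bilinearly into terms $\is{S^{p+k}x}{S^{q+l}y}$ with $x,y\in\xx$, pass to the limit entrywise using (iii), and conclude via the core property (ii). The limit mechanism and the absorption of inner powers into exponents are the right crux, and you identify them as such.

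There is, however, one concrete false step: you assert that $\ff=\lin\bigcup_{n=0}^\infty S^n(\xx)$ ``sits inside $\dz{S_\omega^n}$ for every $n$ and every $\omega$, again by (i), so that each $S_\omega$ also acts on $\ff$.'' Hypothesis (i) puts $\xx$, not $S^k(\xx)$, inside $\dzn{S_\omega}$; a vector $S^k x$ need not lie in the domain of $S_\omega$ at all, so $S_\omega$ does not act on $\ff$ and the criterion of \cite{c-s-sz} cannot be applied to $S_\omega|_{\ff}$. The paper's device is to introduce, for each $\omega$, the companion invariant domain $\ff_\omega=\lin\bigcup_{n=0}^\infty S_\omega^n(\xx)$ and, for each $f_i=\sum_{x,k}\zeta^{(i)}_{x,k}S^k x\in\ff$, the companion vector $f_{i,\omega}=\sum_{x,k}\zeta^{(i)}_{x,k}S_\omega^k x\in\ff_\omega$; the positivity needed on the $\omega$-side is that of $\sum a^{i,j}_{p,q}\is{S_\omega^p f_{i,\omega}}{S_\omega^q f_{j,\omega}}$, furnished by applying \cite[Theorem 21]{c-s-sz} to the subnormal operator $S_\omega|_{\ff_\omega}$, and it expands into exactly the terms $\is{S_\omega^{p+k}x}{S_\omega^{q+l}y}$ that (iii) controls. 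With that substitution your argument goes through. Note also that the positivity condition of \cite[Theorem 21]{c-s-sz} should be quoted precisely: it involves coefficient systems $\{a^{i,j}_{p,q}\}$ with $\sum_{i,j}\sum_{p,q} a^{i,j}_{p,q}\lambda^p\bar\lambda^q z_i\bar z_j\Ge 0$ for all $\lambda,z_1,\dots,z_m\in\cbb$, which is stronger than positive semidefiniteness of a single Gram matrix; but since it is in any case a finite linear inequality in the inner products, it is preserved under the pointwise limits in (iii) exactly as you argue.
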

   \begin{proof}
Set $\ff_{\omega}=\lin \bigcup_{n=0}^\infty
S_{\omega}^n(\xx)$ for $\omega \in \varOmega$. It is
clear that $S_{\omega}|_{\ff_{\omega}}$ is a subnormal
operator in $\overline{\ff_{\omega}}$ with an
invariant domain.

Take a finite system $\{a_{p,q}^{i,j}\}_{p,q = 0,
\ldots, n}^{i,j=1, \ldots, m}$ of complex numbers such
that
   \begin{align*}
\sum_{i,j=1}^m \sum_{p,q=0}^n a_{p,q}^{i,j} \lambda^p
\bar \lambda^q z_i \bar z_j \Ge 0, \quad \lambda, z_1,
\ldots, z_m \in \cbb.
   \end{align*}
Let $f_1, \ldots, f_m$ be arbitrary vectors in $\ff$.
Then for every $i \in \{1, \ldots, m\}$, there exists
a positive integer $r$ and a system
$\{\zeta_{x,k}^{(i)}\colon x \in \xx, k= 1, \ldots,
r\}$ of complex numbers such that the set $\{x \in
\xx\colon \zeta_{x,k}^{(i)} \neq 0\}$ is finite for
every $k\in \{1, \ldots, r\}$, and $f_i = \sum_{x \in
\xx}\sum_{k=1}^r \zeta_{x,k}^{(i)} S^k x$. Set
$f_{i,\omega} = \sum_{x \in \xx}\sum_{k=1}^r
\zeta_{x,k}^{(i)} S_{\omega}^k x$ for $i \in \{1,
\ldots, m\}$ and $\omega \in \varOmega$. Then
$f_{i,\omega} \in \ff_{\omega}$ for all $i \in \{1,
\ldots, m\}$ and $\omega \in \varOmega$. Applying
\cite[Theorem 21]{c-s-sz} to the subnormal operators
$S_{\omega}|_{\ff_{\omega}}$, we get
\allowdisplaybreaks
   \begin{multline*}
\sum_{i,j=1}^m \sum_{p,q=0}^n a_{p,q}^{i,j} \is{S^p
f_i} {S^q f_j} = \sum_{i,j=1}^m \sum_{p,q=0}^n
\sum_{x,y \in \xx} \sum_{k,l=1}^r a_{p,q}^{i,j}
\zeta_{x,k}^{(i)} \overline{\zeta_{y,l}^{(j)}}
\is{S^{p+k} x} {S^{q+l} y}
   \\
\overset{{\rm (iii)}}= \lim_{\omega \in \varOmega}
\sum_{i,j=1}^m \sum_{p,q=0}^n \sum_{x,y \in \xx}
\sum_{k,l=1}^r a_{p,q}^{i,j} \zeta_{x,k}^{(i)}
\overline{\zeta_{y,l}^{(j)}} \is{S_{\omega}^{p+k}
x} {S_{\omega}^{q+l} y}
   \\
= \lim_{\omega \in \varOmega} \sum_{i,j=1}^m
\sum_{p,q=0}^n a_{p,q}^{i,j} \is{S_{\omega}^p
f_{i,\omega}} {S_{\omega}^q f_{j,\omega}} \Ge 0.
   \end{multline*}
This means that the operator $S|_{\ff}$ satisfies
condition (ii) of \cite[Theorem 21]{c-s-sz}. Since
$S|_{\ff}$ has an invariant domain, we deduce from
\cite[Theorem 21]{c-s-sz} that $S|_{\ff}$ is
subnormal. Combining the latter with the assumption
that $\ff$ is a core of $S$, we see that $S$ itself is
subnormal. This completes the proof.
   \end{proof}
We say that a densely defined operator $S$ in a
complex Hilbert space $\hh$ is {\em cyclic} with
a {\em cyclic vector} $e \in \hh$ if $e \in
\dzn{S}$ and $\lin\{S^n e\colon n=0,1, \ldots\}$
is a core of $S$.
   \begin{cor}
Let $\{S_{\omega}\}_{\omega \in \varOmega}$ be a net
of subnormal operators in a complex Hilbert space
$\hh$ and let $S$ be a cyclic operator in $\hh$ with a
cyclic vector $e$ such that
   \begin{enumerate}
   \item[(i)] $e \in
\bigcap_{\omega \in \varOmega}\dzn{S_{\omega}}$,
   \item[(ii)]  $\is{S^m e}{S^n e} =
\lim_{\omega \in \varOmega} \is{S_{\omega}^m
e}{S_{\omega}^n e}$ for all $m,n \in \zbb_+$.
   \end{enumerate}
Then $S$ is subnormal.
   \end{cor}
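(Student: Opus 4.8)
The statement to prove is the Corollary immediately following Theorem \ref{tw1}. It asserts that a cyclic operator $S$ with cyclic vector $e$, approximated in the appropriate moment-wise sense by a net $\{S_\omega\}$ of subnormal operators, is itself subnormal. This is manifestly a special case of Theorem \ref{tw1}, so the entire proof strategy is to recognise the corollary as an instance of the theorem and verify that the three hypotheses of Theorem \ref{tw1} hold with a suitable choice of the set $\xx$.

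**Plan de la démonstration :**

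The plan is to apply Theorem \ref{tw1} with the singleton $\xx := \{e\}$. First I would observe that with this choice, condition (i) of Theorem \ref{tw1} reads $\{e\} \subseteq \dzn{S} \cap \bigcap_{\omega \in \varOmega}\dzn{S_\omega}$; the membership $e \in \dzn{S}$ is part of the very definition of a cyclic vector (which requires $e \in \dzn{S}$), while $e \in \bigcap_{\omega} \dzn{S_\omega}$ is precisely hypothesis (i) of the corollary. Next, the space $\ff = \lin \bigcup_{n=0}^\infty S^n(\xx)$ specialises to $\lin\{S^n e \colon n = 0,1,\ldots\}$, which is a core of $S$ by the definition of cyclicity; this gives condition (ii) of Theorem \ref{tw1}. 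Finally, condition (iii) of Theorem \ref{tw1} demands $\is{S^m x}{S^n y} = \lim_{\omega} \is{S_\omega^m x}{S_\omega^n y}$ for all $x,y \in \xx$ and $m,n \in \zbb_+$; since $\xx$ is the singleton $\{e\}$, this is exactly hypothesis (ii) of the corollary. Having verified all three hypotheses, the conclusion that $S$ is subnormal follows directly from Theorem \ref{tw1}.

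**Point délicat :**

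The only thing requiring attention is the bookkeeping that each defining property of cyclicity matches a hypothesis of the theorem: one must take care that ``$e$ is a cyclic vector'' supplies both $e \in \dzn S$ and the core property, so that hypothesis (i) of the corollary need only furnish the $\dzn{S_\omega}$ part. There is no genuine obstacle here—the corollary is a routine specialisation—so I would expect the write-up to be a short paragraph reducing everything to an application of Theorem \ref{tw1}.
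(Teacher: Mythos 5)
Your proof is correct and is exactly the intended argument: the paper states this corollary without proof precisely because it is the specialization of Theorem \ref{tw1} to $\xx=\{e\}$, with $e\in\dzn{S}$ and the core property supplied by the definition of cyclicity and the remaining hypotheses matching those of the corollary verbatim.
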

   \subsection{\label{subs1}Necessity}
Let us recall a well-known fact that
$C^\infty$-vectors of a subnormal operator always
generate Stieltjes moment sequences.
   \begin{pro}\label{necess-gen}
If $S$ is a subnormal operator in a complex
Hilbert space $\hh$, then $\dzn{S} = \sti{S}$,
where $\sti{S}$ stands for the set of all vectors
$f \in \dzn{S}$ such that the sequence $\{\|S^n
f\|^2\}_{n=0}^\infty$ is a Stieltjes moment
sequence.
   \end{pro}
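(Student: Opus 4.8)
The inclusion $\sti S \subseteq \dzn S$ holds by the very definition of $\sti S$, so the entire content is the reverse inclusion $\dzn S \subseteq \sti S$. The plan is to transport the problem to the normal extension furnished by subnormality and then read off a representing measure from the spectral theorem for normal operators.

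First I would fix a complex Hilbert space $\kk \supseteq \hh$ (isometric embedding) and a normal operator $N$ in $\kk$ with $Sh = Nh$ for all $h \in \dz S$; in particular $\dz S \subseteq \dz N$ and the two operators agree on $\dz S$. Taking an arbitrary $f \in \dzn S$, the key step is an induction on $n$ showing that $f \in \dz{N^n}$ and $N^n f = S^n f$ for every $n \in \zbb_+$. The base case $n=0$ is trivial. For the inductive step, note that $f \in \dz{S^{n+1}}$ forces $S^n f \in \dz S \subseteq \dz N$; combined with the inductive hypothesis $N^n f = S^n f$ this gives $N^n f \in \dz N$, hence $f \in \dz{N^{n+1}}$, and $N^{n+1} f = N(N^n f) = N(S^n f) = S(S^n f) = S^{n+1} f$, where the third equality uses that $N$ agrees with $S$ on $\dz S$ and that $S^n f \in \dz S$. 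Consequently $\|S^n f\|^2 = \|N^n f\|^2$ for all $n$.

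Next I would invoke the spectral theorem for the normal operator $N$, writing $N = \int_\cbb z \, \D E(z)$ for its spectral measure $E$ on $\cbb$, and set $\mu_f(\cdot) = \is{E(\cdot) f}{f}$, a finite positive Borel measure on $\cbb$ with $\mu_f(\cbb) = \|f\|^2$. Membership $f \in \dz{N^n}$ is exactly the condition $\int_\cbb |z|^{2n} \, \D \mu_f(z) < \infty$, and then $\|N^n f\|^2 = \int_\cbb |z|^{2n} \, \D \mu_f(z)$. Pushing $\mu_f$ forward under the map $z \mapsto |z|^2$ from $\cbb$ onto $\rbb_+$ produces a positive Borel measure $\rho_f$ on $\rbb_+$ satisfying $\int_{\rbb_+} s^n \, \D \rho_f(s) = \int_\cbb |z|^{2n} \, \D \mu_f(z) = \|S^n f\|^2$ for every $n$, which exhibits $\{\|S^n f\|^2\}_{n=0}^\infty$ as a Stieltjes moment sequence; thus $f \in \sti S$.

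I do not anticipate a serious obstacle here, as the argument is essentially bookkeeping. The one point requiring care is the domain tracking in the induction—ensuring at each stage that $S^n f$ lands in $\dz S$ (hence in $\dz N$) before applying $N$—together with the routine verification that the pushforward measure is well defined and finite, so that the spectral integral legitimately yields a genuine representing measure on the half-line $\rbb_+$ rather than merely on $\cbb$.
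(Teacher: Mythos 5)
Your argument is correct and follows essentially the same route as the paper: pass to a normal extension $N$, identify $\|S^n f\|^2$ with $\|N^n f\|^2$ via the inclusion $\dzn{S}\subseteq\dzn{N}$ (which the paper calls evident and you rightly verify by induction on the domains), and then push the spectral measure of $N$ forward under $z\mapsto|z|^2$ to obtain a representing measure on $\rbb_+$. The paper packages the last step as an appeal to the measure transport theorem, but the content is identical.
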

   \begin{proof}
Let $N$ be a normal extension of $S$ acting in a
complex Hilbert space $\kk \supseteq \hh$ and let $E$
be the spectral measure of $N$. Define the mapping
$\phi \colon \cbb \to \rbb_+$ by $\phi(z)=|z|^2$, $z
\in \cbb$. Since evidently $\dzn{S} \subseteq
\dzn{N}$, we deduce from the measure transport theorem
(cf.\ \cite[Theorem 5.4.10]{b-s}) that for every $f
\in \dzn{S}$,
   \begin{align*}
\|S^n f\|^2 = \|N^n f\|^2 &= \Big\|\int_{\cbb} z^n
E(\D z)f\Big\|^2
   \\
   &= \int_{\cbb} \phi(z)^n \is{E(\D z)f}{f} =
\int_0^\infty t^n \is{F(\D t)f}{f}, \quad n \in
\zbb_+,
   \end{align*}
where $F$ is the spectral measure on $\rbb_+$ given by
$F(\sigma) = E(\phi^{-1}(\sigma))$ for $\sigma \in
\borel{\rbb_+}$. This implies that $\dzn{S} \subseteq
\sti{S}$.
   \end{proof}
It follows from Proposition \ref{necess-gen} that if
$S$ is a subnormal operator with invariant domain,
then $S$ is densely defined and $\dz{S}=\sti{S}$. One
might expect that the reverse implication holds as
well. This is really the case for bounded operators
(cf.\ \cite{Lam}) and for some unbounded operators
that have sufficiently many analytic vectors (cf.\
\cite[Theorem 7]{StSz1}). In Section \ref{cfs} we show
that this is also the case for weighted shifts on
directed trees that have sufficiently many
quasi-analytic vectors (see Theorem \ref{main-0}).
However, in general, this is not the case. Indeed, one
can construct a densely defined operator $N$ in a
complex Hilbert space $\hh$ which is not subnormal and
which has the following properties (see
\cite{Cod,Sch,sto-ark}):
   \begin{gather}  \label{fn1}
N(\dz{N}) \subseteq \dz{N}, \, \dz{N} \subseteq
\dz{N^*}, \, N^*(\dz{N}) \subseteq \dz{N}
   \\
\text{ and } N^*Nf = NN^*f \text{ for all } f\in
\dz{N}. \label{fn2}
   \end{gather}
We show that for such $N$, $\dz{N}=\sti{N}$.
Indeed, by \eqref{fn1} and \eqref{fn2}, we have
   \begin{align*}
\sum_{k,l=0}^n \|N^{k+l}f\|^2 \alpha_k
\overline{\alpha_l} = \sum_{k,l=0}^n
\is{(N^*N)^{k+l}f}{f} \alpha_k \overline{\alpha_l} =
\Big\|\sum_{k=0}^n \alpha_k (N^*N)^k f\Big\|^2 \Ge 0,
   \end{align*}
for all $f \in \dz{N}$, $n \in \zbb_+$ and
$\alpha_0,\ldots, \alpha_n \in \cbb$, which means that the
sequence $\{\|N^{n}f\|^2\}_{n=0}^\infty$ is positive
definite for every $f \in \dz{N}$. Replacing $f$ by $Nf$,
we see that the sequence $\{\|N^{n+1}f\|^2\}_{n=0}^\infty$
is positive definite for every $f \in \dz{N}$. Applying the
Stieltjes theorem, we conclude that $\dz{N}=\sti{N}$.
   \section{Towards Subnormality of Weighted Shifts}
   \subsection{A consistency condition}
Applying Proposition \ref{necess-gen}, we get.
   \begin{pro}\label{necess}
Let $\slam$ be a weighted shift on a directed tree
$\tcal$ with weights $\lambdab=\{\lambda_v\}_{v \in
V^\circ}$ such that $\escr \subseteq \dzn{\slam}$. If
$\slam$ is subnormal, then for every $u \in V$ the
sequence $\{\|\slam^n e_u\|^2\}_{n=0}^\infty$ is a
Stieltjes moment sequence.
   \end{pro}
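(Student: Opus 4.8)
The plan is to invoke Proposition~\ref{necess-gen} directly: since $\slam$ is subnormal and $\escr \subseteq \dzn{\slam}$, every basis vector $e_u$ lies in $\dzn{\slam}$, so by that proposition $e_u \in \sti{\slam}$. By the very definition of $\sti{\slam}$, this means that the sequence $\{\|\slam^n e_u\|^2\}_{n=0}^\infty$ is a Stieltjes moment sequence. The only thing that genuinely needs checking is the inclusion $e_u \in \dzn{\slam}$ for each $u \in V$, and this is immediate from the hypothesis $\escr \subseteq \dzn{\slam}$, because $e_u \in \escr$ by construction of the linear span $\escr = \lin\{e_v \colon v \in V\}$.

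I would spell this out in one short chain. Fix $u \in V$. Then $e_u \in \escr \subseteq \dzn{\slam}$, so $e_u \in \dz{\slam^n}$ for every $n \in \zbb_+$, and in particular the quantities $\|\slam^n e_u\|^2$ are all finite (they can be computed explicitly via Lemma~\ref{lem4}\,(iii), though that formula is not needed for the argument). Applying Proposition~\ref{necess-gen} to the subnormal operator $S = \slam$ gives $\dzn{\slam} = \sti{\slam}$, whence $e_u \in \sti{\slam}$. Unwinding the definition of $\sti{\slam}$, the sequence $\{\|\slam^n e_u\|^2\}_{n=0}^\infty$ is a Stieltjes moment sequence, which is exactly the assertion.

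There is no real obstacle here: the statement is essentially a specialization of Proposition~\ref{necess-gen} from arbitrary $C^\infty$-vectors to the particular $C^\infty$-vectors $e_u$. The substantive content — that $C^\infty$-vectors of a subnormal operator generate Stieltjes moment sequences via the spectral measure of a normal extension pushed forward by $z \mapsto |z|^2$ — has already been established in Proposition~\ref{necess-gen}. The role of the hypothesis $\escr \subseteq \dzn{\slam}$ is solely to guarantee that the $e_u$ are admissible inputs, i.e.\ that they are indeed $C^\infty$-vectors; without it the moment-sequence statement would be vacuous since $\|\slam^n e_u\|^2$ need not be defined for all $n$. Thus the proof is a one-line deduction once the membership $e_u \in \dzn{\slam}$ is noted.
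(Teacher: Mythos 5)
Your argument is correct and is exactly the paper's route: the paper states this proposition with the single phrase ``Applying Proposition~\ref{necess-gen}, we get,'' i.e.\ it is the specialization of $\dzn{S}=\sti{S}$ to $S=\slam$ and the vectors $e_u\in\escr\subseteq\dzn{\slam}$. Nothing is missing.
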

The converse of the implication in Proposition
\ref{necess} is valid for bounded weight\-ed shifts on
directed trees (the unbounded case is discussed in
Theorem \ref{main-0}).
   \begin{thm}[\mbox{\cite[Theorem 6.1.3]{j-j-s}}]
\label{charsub} Let $\slam \in \ogr{\ell^2(V)}$ be a
weighted shift on a directed tree $\tcal$ with weights
$\lambdab = \{\lambda_v\}_{v \in V^\circ}$. Then
$\slam$ is subnormal if and only if $\{\|\slam^n
e_u\|^2\}_{n=0}^\infty$ is a Stieltjes moment sequence
for every $u \in V$.
   \end{thm}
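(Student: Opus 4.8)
The forward implication requires no new work: by Proposition~\ref{necess} (which rests on the general necessity result Proposition~\ref{necess-gen}), subnormality of $\slam$ already forces every $\{\|\slam^n e_u\|^2\}_{n=0}^\infty$ to be a Stieltjes moment sequence. The whole task is therefore the converse, and my plan is to manufacture a normal extension of $\slam$ out of the representing measures, the decisive leverage being that boundedness makes those measures determinate.

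First I would record the arithmetic linking neighbouring vertices. Combining $\dzin{n+1}{u} = \bigsqcup_{v \in \dzi{u}}\dzin{n}{v}$ from \eqref{dzinn2} with $\lambda_{u\mid w} = \lambda_v\,\lambda_{v\mid w}$ for $v \in \dzi{u}$, $w \in \des{v}$ from \eqref{recfor2}, Lemma~\ref{lem4}(iii) gives the recursion
\[
\|\slam^{n+1}e_u\|^2 = \sum_{v \in \dzi{u}}|\lambda_v|^2\,\|\slam^n e_v\|^2, \quad u \in V,\ n \in \zbb_+.
\]
Let $\mu_u$ represent $\{\|\slam^n e_u\|^2\}_{n=0}^\infty$. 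Since $\|\slam^0 e_u\|^2 = 1$ each $\mu_u$ is a probability measure, and since $\slam \in \ogr{\ell^2(V)}$ yields $\|\slam^n e_u\| \Le \|\slam\|^n$, the measure $\mu_u$ lives on $[0,\|\slam\|^2]$ and is hence \emph{determinate}. Reading the recursion at the level of moments shows that $s\,\D\mu_u(s)$ and the finite, compactly supported measure $\sum_{v \in \dzi{u}}|\lambda_v|^2\mu_v$ (of total mass $\|\slam e_u\|^2 \Le \|\slam\|^2$) share all their moments; determinacy of compactly supported moment problems then forces them to coincide, i.e.
\[
\int_\sigma s\,\D\mu_u(s) = \sum_{v \in \dzi{u}}|\lambda_v|^2\,\mu_v(\sigma), \quad \sigma \in \borel{\rbb_+},\ u \in V.
\]
This consistency condition is the point where boundedness is genuinely used, and it is exactly what fails for the unbounded, non-hyponormal counterexamples alluded to in the introduction.

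With the consistent family $\{\mu_u\}_{u\in V}$ in hand, the remaining step is to glue it into a single ambient measure space on which $\slam$ appears as the restriction of a multiplication operator, which is automatically normal. Concretely, I would read the consistency identity as a Kolmogorov-type compatibility condition and build, by a projective-limit argument, a measure on the space of infinite descending branches of $\tcal$ carrying in addition a radial coordinate distributed according to the $\mu_u$; then define an isometry $U\colon \ell^2(V) \to L^2$ of that space sending each $e_u$ to a normalized product of a branch-indicator and a power of the radial variable, and check that $U$ intertwines $\slam$ with multiplication by the radial coordinate.

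The main obstacle is precisely this gluing across branching vertices: when $u$ has several children the mass $s\,\D\mu_u$ must be distributed among them in the proportions $|\lambda_v|^2$, and one has to verify that these splittings are mutually compatible along every branch and at every level, so that $U$ is genuinely isometric and intertwining. An alternative that sidesteps the explicit extension is to verify the Bram--Halmos positivity inequalities directly: using the closed-form inner products \eqref{smsn} one rewrites $\sum_{i,j}\is{\slam^i f_j}{\slam^j f_i}$ (for $f_0,\dots,f_k \in \escr$, which suffice by density) as an integral against the $\mu_u$ of an integrand assembled from the coefficients $\lambda_{u\mid v}$, and the crux becomes showing that this integrand is nonnegative. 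Either way, the determinacy-driven consistency condition above is the indispensable bridge between the per-vertex moment hypothesis and subnormality.
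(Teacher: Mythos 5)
Your forward implication and your derivation of the consistency condition are both sound: the recursion $\|\slam^{n+1}e_u\|^2=\sum_{v\in\dzi u}|\lambda_v|^2\|\slam^n e_v\|^2$, the compact support (hence determinacy) of each $\mu_u$, and the resulting identity $\int_\sigma s\,\D\mu_u(s)=\sum_{v\in\dzi u}|\lambda_v|^2\mu_v(\sigma)$ are all correct, and the latter is equivalent to the consistency condition \eqref{muu+} (this is exactly the content of Lemma \ref{charsub2}\,(ii) via Lemma \ref{bext}). For the record, the paper does not prove Theorem \ref{charsub} at all; it imports it from \cite{j-j-s}.

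The genuine gap is the final step, which you yourself flag as ``the main obstacle'': you never pass from the consistent system $\{\mu_u\}_{u\in V}$ to an actual normal extension. Both of your proposed finishes --- the projective-limit gluing and the Bram positivity computation --- are left as programs whose crux (``showing that this integrand is nonnegative'') is precisely the assertion to be proved. Nor can you close the gap by invoking Theorem \ref{main}, whose hypothesis you have verified, because the proof of Theorem \ref{main} uses Theorem \ref{charsub}; that would be circular. What you are missing is the short argument that makes the bounded case easy: for fixed $n$ the sets $\dzin{n}{u}$, $u\in V$, are pairwise disjoint, so the vectors $\slam^n e_u$, $u\in V$, are pairwise orthogonal, whence for every $f\in\ell^2(V)$,
\begin{align*}
\|\slam^n f\|^2=\sum_{u\in V}|f(u)|^2\,\|\slam^n e_u\|^2=\int_0^\infty s^n\,\D\Big(\sum_{u\in V}|f(u)|^2\mu_u\Big)(s), \quad n\in\zbb_+,
\end{align*}
so $\{\|\slam^n f\|^2\}_{n=0}^\infty$ is a Stieltjes moment sequence for \emph{every} $f\in\ell^2(V)$, and Lambert's characterization \cite{Lam} of subnormality of bounded operators applies directly. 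This route needs neither determinacy nor the consistency condition; your consistency condition is the right object for the unbounded theory (Theorem \ref{main}), but here it is a detour, and without a completed extension construction your proof of the converse is not finished.
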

If $\slam$ is a subnormal weighted shift on a directed
tree $\tcal$, then in view of Proposition \ref{necess}
we can attach to each vertex $u \in V$ a representing
measure $\mu_u$ of the Stieltjes moment sequence
$\{\|\slam^n e_u\|^2\}_{n=0}^\infty$ (of course, since
the sequence $\{\|\slam^n e_u\|^2\}_{n=0}^\infty$ is
not determinate in general, we have to choose one of
them); note that any such $\mu_u$ is a probability
measure. Hence, it is tempting to find relationships
between these representing measures. This has been
done in the case of bounded weighted shifts in
\cite[Lemma 6.1.10]{j-j-s}. What is stated below is an
adaptation of this lemma (and its proof) to the
unbounded case. As opposed to the bounded case,
implication $1^\circ \Rightarrow 2^\circ$ of Lemma
\ref{charsub2} below is not true in general (cf.\
\cite{j-j-s4}).
   \begin{lem} \label{charsub2}
Let $\slam$ be a weighted shift on a directed tree
$\tcal$ with weights $\lambdab=\{\lambda_v\}_{v \in
V^\circ}$ such that $\escr \subseteq \dzn{\slam}$. Let
$u \in V^\prime$. Suppose that for every $v \in \dzi
u$ the sequence $\{\|\slam^n e_v\|^2\}_{n=0}^\infty$
is a Stieltjes moment sequence with a representing
measure $\mu_v$. Consider the following two
conditions\,\footnote{\;\;We adhere to the standard
convention that $0 \cdot \infty = 0$; see also
footnote \ref{foot}.}{\em :}
   \begin{enumerate}
   \item[$1^\circ$] $\{\|\slam^n e_u\|^2\}_{n=0}^\infty$
is a Stieltjes moment sequence,
   \item[$2^\circ$]   $\slam$ satisfies
the consistency condition at the vertex $u$, i.e.,
   \begin{align} \label{alanconsi}
\sum_{v \in \dzi{u}} |\lambda_v|^2 \int_0^\infty \frac
1 s\, \D \mu_v(s) \Le 1.
   \end{align}
   \end{enumerate}
   Then the following assertions are valid{\em :}
   \begin{enumerate}
   \item[(i)] if $2^\circ$ holds, then so does
$1^\circ$ and the positive Borel measure $\mu_u$ on
$\rbb_+$ defined by
   \begin{align} \label{muu+}
\mu_u(\sigma) = \sum_{v \in \dzi u} |\lambda_v|^2
\int_\sigma \frac 1 s \D \mu_v(s) + \varepsilon_u
\delta_0(\sigma), \quad \sigma \in \borel{\rbb_+},
      \end{align}
with
   \begin{align} \label{muu++}
\varepsilon_u=1 - \sum_{v \in \dzi u} |\lambda_v|^2
\int_0^\infty \frac 1 s \D \mu_v(s)
   \end{align}
is a representing measure of $\{\|\slam^n
e_u\|^2\}_{n=0}^\infty$,
   \item[(ii)] if $1^\circ$ holds and $\{\|\slam^{n+1}
e_u\|^2\}_{n=0}^\infty$ is determinate, then $2^\circ$
holds, the Stieltjes moment sequence $\{\|\slam^n
e_u\|^2\}_{n=0}^\infty$ is determinate and its unique
representing measure $\mu_u$ is given by \eqref{muu+}
and \eqref{muu++}.
   \end{enumerate}
   \end{lem}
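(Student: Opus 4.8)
The plan is to reduce everything to the backward extension Lemma \ref{bext}. \emph{First} I would isolate the moment recurrence
\[
\|\slam^n e_u\|^2 = \sum_{v \in \dzi u} |\lambda_v|^2\, \|\slam^{n-1} e_v\|^2, \quad n \Ge 1,
\]
which expresses the moments attached to $u$ in terms of those attached to its children. To prove it I would start from Lemma \ref{lem4}(iii), giving $\|\slam^n e_u\|^2 = \sum_{w \in \dzin{n}{u}} |\lambda_{u\mid w}|^2$, split $\dzin{n}{u} = \bigsqcup_{v \in \dzi u} \dzin{n-1}{v}$ via \eqref{dzinn2}, and for $w \in \dzin{n-1}{v}$ (so that $\pa v = u$) factor $\lambda_{u\mid w} = \lambda_v \lambda_{v\mid w}$ by \eqref{recfor2}. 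Pulling out $|\lambda_v|^2$ and applying Lemma \ref{lem4}(iii) to $v$ turns the inner sum $\sum_{w\in\dzin{n-1}{v}}|\lambda_{v\mid w}|^2$ into $\|\slam^{n-1}e_v\|^2$.

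\emph{Next} I would package the children's measures into a single positive Borel measure $\rho := \sum_{v \in \dzi u} |\lambda_v|^2 \mu_v$ on $\rbb_+$. Since each $\mu_v$ is a probability measure, its total mass is $\rho(\rbb_+) = \sum_{v\in\dzi u} |\lambda_v|^2 = \|\slam e_u\|^2$, which is finite because $\escr \subseteq \dzn{\slam}$; thus $\rho$ is a finite measure. By the monotone convergence theorem together with the recurrence, $\int_0^\infty s^n \D\rho(s) = \sum_{v \in \dzi u} |\lambda_v|^2 \|\slam^n e_v\|^2 = \|\slam^{n+1} e_u\|^2$ for every $n \in \zbb_+$, so $\rho$ represents the shifted sequence $\{\|\slam^{n+1} e_u\|^2\}_{n=0}^\infty$; in particular the latter is automatically a Stieltjes moment sequence.

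The crucial observation is that recovering $\{\|\slam^n e_u\|^2\}_{n=0}^\infty$ from its shift is a backward extension with $\vartheta := \|e_u\|^2 = 1$. Applying Lemma \ref{bext} to the Stieltjes moment sequence $\{\|\slam^{n+1} e_u\|^2\}_{n=0}^\infty$ with prepended term $t_{-1} = 1$, the resulting sequence $\{t_{n-1}\}_{n=0}^\infty$ is exactly $\{\|\slam^n e_u\|^2\}_{n=0}^\infty$. Because $\int_0^\infty \frac 1 s \D\rho(s) = \sum_{v \in \dzi u} |\lambda_v|^2 \int_0^\infty \frac 1 s \D\mu_v(s)$ (again by monotone convergence, with the convention $\frac 1 0 = \infty$), the consistency condition $2^\circ$ is literally condition (iii) of Lemma \ref{bext} with $\mu$ replaced by $\rho$ and $\vartheta = 1$, and the measure $\nu_\rho$ furnished by \eqref{nu} coincides with $\mu_u$ of \eqref{muu+}, the scalar $\varepsilon_u$ of \eqref{muu++} being exactly $\vartheta - \int_0^\infty \frac 1 s \D\rho(s)$.

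Both assertions then follow by direct translation. For (i), assuming $2^\circ$ I invoke the implication (iii)$\Rightarrow$(i) of Lemma \ref{bext} to obtain $1^\circ$, and formula \eqref{nu} identifies $\mu_u = \nu_\rho$ as a representing measure of $\{\|\slam^n e_u\|^2\}_{n=0}^\infty$. For (ii), assuming $1^\circ$ and determinacy of $\{\|\slam^{n+1} e_u\|^2\}_{n=0}^\infty$ --- so that $\rho$ is its unique representing measure --- I apply the final determinacy clause of Lemma \ref{bext}, which yields determinacy of $\{\|\slam^n e_u\|^2\}_{n=0}^\infty$, the inequality $\int_0^\infty \frac 1 s \D\rho(s) \Le 1$ (i.e.\ $2^\circ$), and that $\nu_\rho = \mu_u$ is its unique representing measure. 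I expect the only real obstacle to be the bookkeeping in the first step: correctly collapsing the double sum over the tree using the disjoint decomposition \eqref{dzinn2} and the factorization \eqref{recfor2}. Once the recurrence and $\rho$ are in place, the remainder is a mechanical matching against the hypotheses and conclusions of Lemma \ref{bext}.
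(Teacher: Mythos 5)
Your proposal is correct and follows essentially the same route as the paper: establish the moment recurrence $\|\slam^{n+1}e_u\|^2=\sum_{v\in\dzi u}|\lambda_v|^2\|\slam^{n}e_v\|^2$ via Lemma \ref{lem4}\,(iii), \eqref{dzinn2} and \eqref{recfor2}, represent the shifted sequence by the aggregated measure $\sum_{v\in\dzi u}|\lambda_v|^2\mu_v$, and then read off both assertions from Lemma \ref{bext} with $\vartheta=1$. The only (immaterial) difference is that in part (ii) you invoke the final determinacy clause of Lemma \ref{bext} directly, whereas the paper first passes through the implication (i)$\Rightarrow$(iii) and uses determinacy to identify the resulting representing measure with $\mu$.
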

   \begin{proof}
Define the positive Borel measure $\mu$ on $\rbb_+$ by
   \begin{align*}
\mu(\sigma) = \sum_{v \in \dzi u} |\lambda_v|^2
\mu_v(\sigma), \quad \sigma \in \borel{\rbb_+}.
   \end{align*}
It is a matter of routine to show that
   \begin{align}  \label{leb2}
\int_0^\infty f \D \mu = \sum_{v \in \dzi u}
|\lambda_v|^2 \int_0^\infty f \D \mu_v
   \end{align}
for every Borel function $f\colon {[0,\infty)} \to
[0,\infty]$. Using the inclusion $\escr \subseteq
\dzn{\slam}$ and applying Lemma \ref{lem4}\,(iii)
twice, we obtain \allowdisplaybreaks
   \begin{align*}
\|\slam^{n+1} e_u\|^2 & \hspace{2.2ex}= \sum_{w \in
\dzin{n+1}{u}} |\lambda_{u\mid w}|^2
   \\
& \hspace{.4ex} \overset{\eqref{dzinn2}}= \sum_{v \in
\dzi u} \sum_{w \in \dzin{n} v} |\lambda_{u\mid w}|^2
\notag
   \\
& \hspace{.4ex} \overset{\eqref{recfor2}}= \sum_{v \in
\dzi u} |\lambda_v|^2 \sum_{w \in \dzin{n} v}
|\lambda_{v\mid w}|^2 \notag
   \\
& \hspace{2.2ex} =\sum_{v \in \dzi u} |\lambda_v|^2
\|\slam^n e_v\|^2, \quad n \in \zbb_+. \notag
   \end{align*}
This implies that
   \begin{align*}
\|\slam^{n+1} e_u\|^2 = \sum_{v \in \dzi u}
|\lambda_v|^2 \int_0^\infty s^n \, \D \mu_v(s)
\overset{\eqref{leb2}}= \int_0^\infty s^n \D \mu(s),
\quad n \in \zbb_+.
   \end{align*}
   Hence the sequence $\{\|\slam^{n+1}
   e_u\|^2\}_{n=0}^\infty$ is a Stieltjes moment
   sequence with a representing measure $\mu$.

Set $t_n = \|\slam^{n+1} e_u\|^2$ for $n \in \zbb_+$,
and $t_{-1}=1$. Note that
   \begin{align*}
t_{n-1}=\|\slam^{n} e_u\|^2, \quad n \in \zbb_+.
   \end{align*}

Suppose that $2^\circ$ holds. Then, by
\eqref{alanconsi} and \eqref{leb2}, we have
$\int_0^\infty \frac 1 s \D \mu(s) \Le 1$. Applying
implication (iii)$\Rightarrow$(i) of Lemma \ref{bext},
we see that $1^\circ$ holds, and, by \eqref{leb2}, the
measure $\mu_u$ defined by \eqref{muu+} and
\eqref{muu++} is a representing measure of the
Stieltjes moment sequence $\{\|\slam^n
e_u\|^2\}_{n=0}^\infty$.

Suppose now that $1^\circ$ holds and the Stieltjes
moment sequence $\{\|\slam^{n+1}
e_u\|^2\}_{n=0}^\infty$ is determinate. It follows
from implication (i)$\Rightarrow$(iii) of Lemma
\ref{bext} that there is a representing measure
$\mu^\prime$ of $\{\|\slam^{n+1}
e_u\|^2\}_{n=0}^\infty$ such that $\int_0^\infty \frac
1 s \D \mu^\prime(s) \Le 1$. Since $\{\|\slam^{n+1}
e_u\|^2\}_{n=0}^\infty$ is determinate, we get
$\mu^\prime=\mu$, which implies $2^\circ$. The
remaining part of assertion (ii) follows from the last
assertion of Lemma \ref{bext}.
   \end{proof}
Now we prove that the determinacy of appropriate
Stieltjes moment sequences attached to a weighted
shift on a directed tree implies the existence of a
consistent system of measures (see also Theorem
\ref{necessdet2}). As shown in \cite{j-j-s4}, Lemma
\ref{2necess+} below is no longer true if the
assumption on determinacy is dropped (by Lemma
\ref{lem3}\,(iv), the converse of Lemma \ref{2necess+}
is true without assuming determinacy).
   \begin{lem} \label{2necess+}
 Let $\slam$ be a weighted shift on a directed tree
 $\tcal$ with weights $\lambdab=\{\lambda_v\}_{v \in
 V^\circ}$ such that $\escr \subseteq \dzn{\slam}$.
 Assume that for every $u \in V^\prime$, the sequence
 $\{\|\slam^{n} e_u\|^2\}_{n=0}^\infty$ is a Stieltjes
 moment sequence, and that the Stieltjes moment
 sequence $\{\|\slam^{n+1} e_u\|^2\}_{n=0}^\infty$
 $($cf.\ \eqref{st+1}$)$ is determinate. Then there
 exist a system $\{\mu_u\}_{u \in V}$ of Borel
 probability measures on $\rbb_+$ and a system
 $\{\varepsilon_u\}_{u \in V}$ of nonnegative real
 numbers that satisfy \eqref{muu+} for every $u \in
 V$.
   \end{lem}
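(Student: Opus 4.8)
The plan is to read the statement off Lemma \ref{charsub2}\,(ii) applied separately at each branching vertex, the one delicate point being to arrange that the measure attached to each vertex is uniquely determined, so that the system $\{\mu_u\}_{u \in V}$ is globally coherent. Accordingly, I would aim to define $\mu_u$ as \emph{the} (unique) representing measure of $\{\|\slam^n e_u\|^2\}_{n=0}^\infty$ and then extract \eqref{muu+} together with the nonnegativity of $\varepsilon_u$ from that lemma.

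First I would treat the leaves. If $u \in V \setminus V^\prime$ then $\dzi u = \varnothing$, hence $\dzin n u = \varnothing$ for $n \Ge 1$, and Lemma \ref{lem4}\,(iii) (applicable since $\escr \subseteq \dzn{\slam}$) gives $\|\slam^n e_u\|^2 = 0$ for $n \Ge 1$, while $\|e_u\|^2 = 1$. Thus $\{\|\slam^n e_u\|^2\}_{n=0}^\infty = (1,0,0,\dots)$ is a determinate Stieltjes moment sequence whose only representing measure is $\delta_0$; I set $\mu_u = \delta_0$ and $\varepsilon_u = 1$, for which \eqref{muu+} becomes $\delta_0 = 0 + 1 \cdot \delta_0$ (the sum over the empty set $\dzi u$ vanishing by convention), in agreement with the empty-sum value of $\varepsilon_u$ in \eqref{muu++}. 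Consequently every vertex $v \in V$ carries a Stieltjes moment sequence $\{\|\slam^n e_v\|^2\}_{n=0}^\infty$ — by hypothesis when $v \in V^\prime$, by the above when $v$ is a leaf — and therefore admits at least one representing measure.

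Next, for each $u \in V^\prime$ I would invoke Lemma \ref{charsub2}\,(ii). Its hypotheses are met: every child $v \in \dzi u$ possesses a representing measure by the previous paragraph, condition $1^\circ$ is precisely the assumption that $\{\|\slam^n e_u\|^2\}_{n=0}^\infty$ is a Stieltjes moment sequence, and $\{\|\slam^{n+1} e_u\|^2\}_{n=0}^\infty$ is determinate by assumption. The lemma then upgrades the determinacy from $\{\|\slam^{n+1} e_u\|^2\}_{n=0}^\infty$ to $\{\|\slam^n e_u\|^2\}_{n=0}^\infty$ itself. The decisive observation is that this determinacy conclusion uses only the \emph{existence} of representing measures for the children, not their uniqueness; hence it may be established at every vertex of $V^\prime$ independently, without a bottom-up induction that a tree with infinite descending chains would not support. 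It follows that $\{\|\slam^n e_u\|^2\}_{n=0}^\infty$ is determinate for \emph{every} $u \in V$, so that $\mu_u$, defined as its unique representing measure, is unambiguous and is a probability measure of total mass $\|e_u\|^2 = 1$.

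Finally, with this canonical system $\{\mu_u\}_{u \in V}$ fixed, I would apply Lemma \ref{charsub2}\,(ii) once more at each $u \in V^\prime$, now supplying the children's measures $\{\mu_v\}_{v \in \dzi u}$: the lemma identifies the unique representing measure $\mu_u$ with the measure prescribed by \eqref{muu+} and \eqref{muu++}, and asserts that the consistency condition $2^\circ$ holds, so that $\varepsilon_u = 1 - \sum_{v \in \dzi u} |\lambda_v|^2 \int_0^\infty \frac 1 s \, \D \mu_v(s) \Ge 0$. Combined with the leaf case this yields \eqref{muu+} for all $u \in V$. I expect the main obstacle to be exactly the determinacy bookkeeping of the preceding paragraph: it is what secures a \emph{single} measure per vertex and hence a coherent system, and it is precisely this that fails once the determinacy hypothesis is dropped (cf.\ \cite{j-j-s4}).
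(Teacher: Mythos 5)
Your proposal is correct and follows essentially the same route as the paper: define $\mu_u$ as the unique representing measure of $\{\|\slam^n e_u\|^2\}_{n=0}^\infty$ for $u \in V^\prime$ (determinacy being inherited from that of $\{\|\slam^{n+1} e_u\|^2\}_{n=0}^\infty$ via Lemma \ref{bext}, which is exactly what Lemma \ref{charsub2}\,(ii) packages), set $\mu_u=\delta_0$ at the leaves, and verify \eqref{muu+} by Lemma \ref{charsub2}\,(ii). Your explicit treatment of the leaf case and of why no bottom-up induction is needed only spells out details the paper leaves implicit.
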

   \begin{proof}
By Lemma \ref{bext}, the Stieltjes moment sequence
$\{\|\slam^n e_u\|^2\}_{n=0}^\infty$ is determinate
for every $u\in V^\prime$. For $u \in V^\prime$, we
denote by $\mu_u$ the unique representing measure of
$\{\|\slam^n e_u\|^2\}_{n=0}^\infty$. If $u \in V
\setminus V^\prime$, then we put $\mu_u=\delta_0$.
Using Lemma \ref{charsub2}\,(ii), we verify that the
system $\{\mu_u\}_{u \in V}$ satisfies \eqref{muu+}
with $\{\varepsilon_u\}_{u \in V}$ defined by
\eqref{muu++}. This completes the proof.
   \end{proof}
   \subsection{Consistent systems of measures}
In this section we prove some important
properties of consistent systems of Borel
probability measures on $\rbb_+$ attached to a
directed tree. They will be used in the proof of
Theorem \ref{main}.
   \begin{lem} \label{lem1}
Let $\tcal$ be a directed tree. Suppose that
$\{\lambda_v\}_{v \in V^\circ}$ is a system of
complex numbers, $\{\varepsilon_v\}_{v \in V}$ is
a system of nonnegative real numbers and
$\{\mu_v\}_{v \in V}$ is a system of Borel
probability measures on $\rbb_+$ satisfying
\eqref{muu+} for every $u \in V$. Then the
following assertions hold\/{\em :}
   \begin{enumerate}
   \item[(i)] for every $u \in V$,
$\sum_{v\in \dzi{u}} |\lambda_v|^2 \int_0^\infty \frac
1 s \D \mu_v(s) \Le 1$ and
   \begin{align*}
\varepsilon_u = 1 - \sum_{v\in \dzi{u}} |\lambda_v|^2
\int_0^\infty \frac 1 s \D \mu_v(s),
   \end{align*}
   \item[(ii)] for every $u \in V$,  $\mu_u(\{0\})= 0$
if and only if $\varepsilon_u=0$,
   \item[(iii)] for every $v \in V^\circ$, if $\lambda_v \neq
0$, then $\mu_v(\{0\})=0$,
   \item[(iv)]  for every $u \in V$,
   \begin{align}  \label{wz2}
\mu_u(\sigma) = \sum_{v\in \dzin{n}{u}}
|\lambda_{u\mid v}|^2 \int_{\sigma} \frac 1 {s^n} \D
\mu_v(s) + \varepsilon_u \delta_0(\sigma), \quad
\sigma \in \borel{\rbb_+}, \, n \Ge 1.
   \end{align}
   \end{enumerate}
   \end{lem}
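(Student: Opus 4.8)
I would treat the four assertions in the order (i), (iii), (ii), (iv), since each uses its predecessors. For (i), evaluate \eqref{muu+} on the whole half-line $\sigma=\rbb_+$: as every $\mu_v$ is a probability measure, $\mu_u(\rbb_+)=1$ and $\delta_0(\rbb_+)=1$, so \eqref{muu+} reduces to $1 = \sum_{v\in\dzi u}|\lambda_v|^2\int_0^\infty\frac 1 s\D\mu_v(s)+\varepsilon_u$. Since $\varepsilon_u\Ge 0$, the nonnegative series is forced to sum to $1-\varepsilon_u\Le 1$, giving the inequality and the displayed formula of (i) simultaneously. For (iii), observe that every $v\in V^\circ$ is a child of $\pa v$; applying (i) with $u=\pa v$ shows the single summand $|\lambda_v|^2\int_0^\infty\frac 1 s\D\mu_v(s)$ is finite, so if $\lambda_v\neq 0$ then $\int_0^\infty\frac 1 s\D\mu_v(s)<\infty$, and the convention $\frac 1 0:=\infty$ forces $\mu_v(\{0\})=0$. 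For (ii), evaluate \eqref{muu+} on $\sigma=\{0\}$: each summand $|\lambda_v|^2\int_{\{0\}}\frac 1 s\D\mu_v(s)$ vanishes, trivially when $\lambda_v=0$ (using $0\cdot\infty=0$) and by (iii) when $\lambda_v\neq 0$, whence $\mu_u(\{0\})=\varepsilon_u$, which is sharper than the stated equivalence.

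The substance is (iv), which I would prove by induction on $n$, the case $n=1$ being exactly \eqref{muu+}. The inductive step rests on the integrated form of \eqref{muu+}: for every $v\in V$ and every Borel function $h\colon\rbb_+\to[0,\infty]$,
   \begin{align*}
\int_0^\infty h\,\D\mu_v = \sum_{w\in\dzi v}|\lambda_w|^2\int_0^\infty\frac{h(s)}{s}\D\mu_w(s)+\varepsilon_v\, h(0),
   \end{align*}
which follows from \eqref{muu+} (the case $h=\chi_\sigma$) by the routine passage through nonnegative simple functions and monotone convergence, exactly as \eqref{leb2} was obtained.

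Assuming \eqref{wz2} for a given $n\Ge 1$, I would insert into its $v$-th term, for each $v\in\dzin n u$, the integrated identity with $h(s)=\chi_\sigma(s)/s^n$, so that $h(s)/s=\chi_\sigma(s)/s^{n+1}$. The delicate point, and the main obstacle, is the boundary term $\varepsilon_v h(0)$, where $h(0)=\infty$ as soon as $0\in\sigma$; this is precisely where (ii) and (iii) are needed. If $\lambda_{u\mid v}\neq 0$ then its factor $\lambda_v$ is nonzero (it is the $j=0$ term of the product \eqref{luv} defining $\lambda_{u\mid v}$), so $\mu_v(\{0\})=0$ and hence $\varepsilon_v=0$ by (ii), killing the term $\varepsilon_v h(0)$; if $\lambda_{u\mid v}=0$ the whole $v$-th contribution is $0$ and agrees with the corresponding right-hand sum because $\lambda_{u\mid w}=\lambda_{u\mid v}\lambda_w=0$ by \eqref{num2}. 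In either case
   \begin{align*}
|\lambda_{u\mid v}|^2\int_\sigma\frac{1}{s^n}\D\mu_v(s)=\sum_{w\in\dzi v}|\lambda_{u\mid w}|^2\int_\sigma\frac{1}{s^{n+1}}\D\mu_w(s),
   \end{align*}
using $|\lambda_{u\mid w}|^2=|\lambda_{u\mid v}|^2|\lambda_w|^2$ from \eqref{num2}. Summing over $v\in\dzin n u$ and invoking the disjoint decomposition $\dzin{n+1}u=\bigsqcup_{v\in\dzin n u}\dzi v$ of \eqref{num1} collapses the double sum to $\sum_{w\in\dzin{n+1}u}|\lambda_{u\mid w}|^2\int_\sigma s^{-(n+1)}\D\mu_w(s)$; feeding this back into \eqref{wz2} for $n$ produces \eqref{wz2} for $n+1$ and closes the induction.
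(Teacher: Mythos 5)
Your proof is correct and follows essentially the same route as the paper: parts (i)--(iii) by evaluating \eqref{muu+} at $\sigma=\rbb_+$ and $\sigma=\{0\}$, and part (iv) by induction on $n$, substituting the consistency relation into the integrals and observing that the Dirac terms $\varepsilon_v\delta_0$ must drop out before collapsing the double sum via \eqref{num1} and \eqref{num2}. The only (harmless) local variation is that you obtain $\varepsilon_v=0$ for $\lambda_{u\mid v}\neq 0$ from parts (ii) and (iii) through the factor $\lambda_v$ of $\lambda_{u\mid v}$, whereas the paper deduces it directly from the finiteness of $\mu_u$ applied to the intermediate identity; both arguments close the induction identically.
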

   \begin{proof}
   (i) Substitute $\sigma = \rbb_+$ into \eqref{muu+}
and note that $\mu_u(\rbb_+)=1$.

   (ii) \& (iii) Substitute $\sigma = \{0\}$ into
\eqref{muu+}.

   (iv) We use induction on $n$. The case of $n=1$
coincides with \eqref{muu+}. Suppose that \eqref{wz2}
is valid for a fixed integer $n\Ge 1$. Then combining
\eqref{muu+} with \eqref{wz2}, we see that
   \begin{multline} \label{wz3}
\mu_u(\sigma) = \sum_{v\in \dzin{n}{u}}
|\lambda_{u\mid v}|^2 \sum_{w \in \dzi{v}}
|\lambda_w|^2\int_{\sigma} \frac 1 {s^{n+1}} \D
\mu_w(s)
   \\
   + \sum_{v\in \dzin{n}{u}} |\lambda_{u\mid v}|^2
\int_{\sigma} \frac 1 {s^n} \D (\varepsilon_v
\delta_0)(s) + \varepsilon_u \delta_0(\sigma), \quad
\sigma \in \borel{\rbb_+}.
   \end{multline}
Since $\mu_u$ is a finite positive measure and
$n\Ge 1$, we deduce from \eqref{wz3} that
$\varepsilon_v=0$ whenever $\lambda_{u\mid v}
\neq 0$, and thus
   \begin{align}  \label{wz4}
\sum_{v\in \dzin{n}{u}} |\lambda_{u\mid v}|^2
\int_{\sigma} \frac 1 {s^n} \D (\varepsilon_v
\delta_0)(s)=0.
   \end{align}
It follows from \eqref{wz3} and \eqref{wz4} that
   \begin{align*}
\mu_u(\sigma) = \sum_{v\in \dzin{n}{u}} \sum_{w \in
\dzi{v}} |\lambda_{u\mid v}\lambda_w|^2\int_{\sigma}
\frac 1 {s^{n+1}} \D \mu_w(s) + \varepsilon_u
\delta_0(\sigma)
      \\
      \overset{\eqref{num1}\&\eqref{num2}}= \sum_{w\in
\dzin{n+1}{u}} |\lambda_{u\mid w}|^2 \int_{\sigma}
\frac 1 {s^{n+1}} \D \mu_w(s) + \varepsilon_u
\delta_0(\sigma).
   \end{align*}
This completes the proof.
   \end{proof}
   \begin{lem} \label{lem3}
Let $\tcal$ be a directed tree. Suppose that
$\lambdab=\{\lambda_v\}_{v \in V^\circ}$ is a
system of complex numbers, $\{\varepsilon_v\}_{v
\in V}$ is a system of nonnegative real numbers
and $\{\mu_v\}_{v \in V}$ is a system of Borel
probability measures on $\rbb_+$ satisfying
\eqref{muu+} for every $u \in V$. Let $\slam$ be
a weighted shift on the directed tree $\tcal$
with weights $\lambdab$. Then the following
assertions hold\/{\em :}
   \begin{enumerate}
   \item[(i)] for all $u \in V$ and $n \in \nbb$,
   \begin{align}  \label{wz5}
\int_0^\infty s^n \D \mu_u(s) = \sum_{v\in
\dzin{n}{u}} |\lambda_{u\mid v}|^2,
   \end{align}
   \item[(ii)] if $\dzin{n}{u}=\varnothing$ for some
$u\in V$ and $n\in \nbb$, then $\mu_v=\delta_0$ for
all $v \in \des{u}$,
   \item[(iii)] $\escr \subseteq \dzn{\slam}$ if and only
if $\int_0^\infty s^n \D \mu_u(s) < \infty$ for all $n
\in \zbb_+$ and $u \in V$,
   \item[(iv)] if $\escr \subseteq \dzn{\slam}$, then
for all $u \in V$ and $n \in \zbb_+$,
   \begin{align}    \label{wz6}
\|\slam^n e_u\|^2 = \int_0^\infty s^n \D \mu_u(s),
   \end{align}
   \item[(v)] $\slam \in \ogr{\ell^2(V)}$ if and
only if there exists a real number $M \Ge 0$ such that
$\supp \mu_u \subseteq [0,M]$ for every $u \in V$.
   \end{enumerate}
   \end{lem}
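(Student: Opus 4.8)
The plan is to prove (i) first, since the remaining four assertions all reduce to it together with the results already at hand. To obtain \eqref{wz5}, fix $u\in V$ and $n\in\nbb$ and integrate the function $s\mapsto s^n$ against the measure identity \eqref{wz2} of Lemma \ref{lem1}(iv). As all terms are nonnegative, Tonelli's theorem lets me interchange the integral with the sum over $\dzin{n}{u}$, giving
\[
\int_0^\infty s^n \D\mu_u(s) = \sum_{v\in\dzin{n}{u}}|\lambda_{u\mid v}|^2\int_0^\infty s^n\,s^{-n}\,\D\mu_v(s) + \varepsilon_u\int_0^\infty s^n\,\D\delta_0(s).
\]
The $\delta_0$-term vanishes because $n\Ge1$, and for each $v$ with $\lambda_{u\mid v}\neq0$ the $j=0$ factor in \eqref{luv} forces $\lambda_v\neq0$, whence $\mu_v(\{0\})=0$ by Lemma \ref{lem1}(iii); consequently $\int_0^\infty s^n s^{-n}\D\mu_v=\mu_v((0,\infty))=1$, while summands with $\lambda_{u\mid v}=0$ contribute nothing to either side. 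This yields \eqref{wz5}. The only delicate point is the cancellation $s^n s^{-n}$ at $s=0$, which is controlled exactly by the vanishing $\mu_v(\{0\})=0$ (footnote convention $1/0=\infty$, $0\cdot\infty=0$).

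For (ii), I would first observe that $\dzin{n}{u}=\varnothing$ propagates upward, i.e.\ $\dzin{n'}{u}=\varnothing$ for every $n'\Ge n$, since $\dzin{n'+1}{u}=\dzi{\dzin{n'}{u}}$. Given $v\in\des{u}$, say $v\in\dzin{k}{u}$, monotonicity of $\dzin{m}{\cdot}$ together with \eqref{chmn} gives $\dzin{m}{v}\subseteq\dzin{m+k}{u}$, which is empty as soon as $m\Ge\max\{1,n-k\}$. Applying (i) with such an $m$ yields $\int_0^\infty s^m\D\mu_v=0$, so the probability measure $\mu_v$ must be concentrated at the origin, that is $\mu_v=\delta_0$; this also covers $v=u$ (take $k=0$).

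Parts (iii) and (iv) are then routine bookkeeping. By Lemma \ref{lem4}(i), $\escr\subseteq\dzn{\slam}$ holds precisely when $\sum_{v\in\dzin{m}{u}}|\lambda_{u\mid v}|^2<\infty$ for all $u\in V$ and all $m\Ge1$; by (i) this sum equals $\int_0^\infty s^m\D\mu_u(s)$, and the case $m=0$ is automatic because $\mu_u(\rbb_+)=1$, giving (iii). For (iv), under the hypothesis $\escr\subseteq\dzn{\slam}$, Lemma \ref{lem4}(iii) combined with (i) gives \eqref{wz6} for $n\Ge1$, and for $n=0$ both sides equal $\|e_u\|^2=1=\mu_u(\rbb_+)$.

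The hard part will be (v), where one must leave the moment framework and produce genuine operator boundedness. For the sufficiency direction, $\supp{\mu_u}\subseteq[0,M]$ yields $\int_0^\infty s^n\D\mu_u\Le M^n<\infty$, so (iii) gives $\escr\subseteq\dzn{\slam}$ and (iv) gives $\sum_{v\in\dzi{u}}|\lambda_v|^2=\|\slam e_u\|^2=\int_0^\infty s\,\D\mu_u\Le M$ for every $u$. The key structural input, which is the content of the boundedness criterion for tree shifts in \cite{j-j-s}, is that for $f\in\escr$ the vectors $\slam e_u$ are supported on the pairwise disjoint sets $\dzi{u}$ (cf.\ \eqref{roz}), so that $\|\slam f\|^2=\sum_{u\in V}|f(u)|^2\|\slam e_u\|^2\Le M\|f\|^2$; since $\escr$ is a core of the closed operator $\slam$ (cf.\ \cite{j-j-s}), this extends to $\slam\in\ogr{\ell^2(V)}$ with $\|\slam\|^2\Le M$. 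Conversely, if $\slam\in\ogr{\ell^2(V)}$ then $\escr\subseteq\dzn{\slam}$ and (iv) gives $\int_0^\infty s^n\D\mu_u=\|\slam^n e_u\|^2\Le\|\slam\|^{2n}$; taking $n$-th roots and letting $n\to\infty$ shows $\supp{\mu_u}\subseteq[0,\|\slam\|^2]$ for all $u$, so $M=\|\slam\|^2$ works.
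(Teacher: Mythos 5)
Your proof is correct and follows essentially the same route as the paper's: integrate $s^n$ against \eqref{wz2} for (i) (your use of Lemma \ref{lem1}(iii) via the $j=0$ factor of \eqref{luv} is an equivalent substitute for the paper's substitution $\sigma=\{0\}$ into \eqref{wz2}), reduce (ii)--(iv) to (i) and Lemma \ref{lem4}, and for (v) use the root test on moments in one direction and the orthogonality-based boundedness criterion of \cite{j-j-s} in the other. No gaps.
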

   \begin{proof}
   (i) Substituting $\sigma=\{0\}$ into
\eqref{wz2}, we see that for every $v\in
\dzin{n}{u}$, either $\lambda_{u\mid v} = 0$, or
$\lambda_{u\mid v} \neq 0$ and $\mu_v(\{0\})=0$.
This and \eqref{wz2} lead to \eqref{wz5}.

   (ii) It follows from \eqref{wz5} that
$\int_0^\infty s^n \D \mu_u(s) = 0$ (recall the
convention that $\sum_{v\in\varnothing} x_v=0$). This
and $n \Ge 1$ implies that $\mu_u((0,\infty))=0$.
Since $\mu_u(\rbb_+) = 1$, we deduce that
$\mu_u=\delta_0$.

If $v \in \des{u}\setminus \{u\}$, then by
\eqref{num3} there exists $k\in \nbb$ such that
$v\in \dzin{k}{u}$. Since $\dzi{\cdot}$ is a
monotonically increasing set-function, we infer
from \eqref{chmn} that $\dzin{n}{v} \subseteq
\dzin{n+k}{u}=\varnothing$. By the previous
argument applied to $v$ in place of $u$, we get
$\mu_v=\delta_0$.

   Assertions (iii) and (iv) follow from (i) and Lemma
\ref{lem4}.

   (v) To prove the ``only if'' part, note that
   \begin{align*}
\lim_{n\to\infty} \Big(\int_0^\infty s^{n} \D
\mu_u(s)\Big)^{1/n} \overset{\eqref{wz6}}=
\lim_{n\to\infty} (\|\slam^n e_u\|^{1/n})^2 \Le
\|\slam\|^2,
   \end{align*}
which implies that $\supp \mu_u \subseteq
[0,\|\slam\|^2]$ (cf.\ \cite[page 71]{Rud}). The proof
of the converse implication goes as follows. In view
of \eqref{wz5}, we have
   \begin{align*}
\sum_{v\in \dzi{u}} |\lambda_{v}|^2 = \int_0^\infty s
\D \mu_u(s) \Le M, \quad u \in V,
   \end{align*}
which, by \cite[Proposition 3.1.8]{j-j-s}, implies
that $\slam \in \ogr{\ell^2(V)}$ and $\|\slam\| \Le
\sqrt{M}$.
   \end{proof}
   \section{Criteria for Subnormality of
Weighted Shifts}
   \subsection{Arbitrary weights}
   After all these preparations we can prove the main
criterion for subnormality of unbounded weighted
shifts on directed trees. It is written in terms of
consistent systems of measures.
   \begin{thm} \label{main}
 Let $\slam$ be a weighted shift on a directed
tree $\tcal$ with weights
$\lambdab=\{\lambda_v\}_{v \in V^\circ}$ such
that $\escr \subseteq \dzn{\slam}$. Suppose that
there exist a system $\{\mu_v\}_{v \in V}$ of
Borel probability measures on $\rbb_+$ and a
system $\{\varepsilon_v\}_{v \in V}$ of
nonnegative real numbers that satisfy
\eqref{muu+} for every $u \in V$. Then $\slam$ is
subnormal.
   \end{thm}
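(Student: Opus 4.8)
The statement asks me to deduce subnormality of $\slam$ from the existence of a consistent system of measures $\{\mu_v\}_{v\in V}$ and nonnegative numbers $\{\varepsilon_v\}_{v\in V}$ satisfying \eqref{muu+}. The natural strategy is to invoke Theorem \ref{tw1}, the weak-limit criterion: I would construct a net (indexed by $i \in \nbb$) of \emph{auxiliary} weighted shifts $S_{\lambdabi}$ that are known to be subnormal, whose weights converge to $\lambdab$ vertexwise, and whose moment data converge to those of $\slam$. The measures $\{\mu_v\}$ are precisely the object that lets me build such approximants. Concretely, the idea is to truncate: for each $i$ I would modify the system of measures (or the tree) so that the approximating shift is bounded — or at least manifestly subnormal — and then let the truncation level tend to infinity.

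**Key steps in order.** First I would fix, for each $i$, a family of measures $\{\mui{v}\}_{v\in V}$ obtained from $\{\mu_v\}$ by restricting them to $[0,i]$ and renormalizing (together with adjusted point masses at $0$), chosen so that the consistency identity \eqref{muu+} is preserved for the modified data. By Lemma \ref{lem3}\,(v), truncating the supports to a common bounded interval forces the associated weighted shift $S_{\lambdabi}$ to be \emph{bounded}, and by Lemma \ref{lem3}\,(iv) its moments are $\|S_{\lambdabi}^n e_u\|^2 = \int_0^\infty s^n \D \mui{u}(s)$. Second, since the truncated system still satisfies \eqref{muu+} and the measures have compact support, Theorem \ref{charsub} (the bounded characterization) yields that each $S_{\lambdabi}$ is subnormal. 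Third, I must verify the three hypotheses of Proposition \ref{potegi}: hypothesis (i) holds because $\escr \subseteq \dzn{\slam}$ by assumption and $\escr \subseteq \dzn{S_{\lambdabi}}$ since $S_{\lambdabi}$ is bounded; hypothesis (ii), the vertexwise convergence $\lambdai{v} \to \lambda_v$, must be arranged by the construction of the truncation; and hypothesis (iii), the convergence $\|S_{\lambdabi}^n e_u\| \to \|\slam^n e_u\|$, follows from $\int_0^\infty s^n \D \mui{u}(s) \to \int_0^\infty s^n \D \mu_u(s)$ (monotone or dominated convergence as $i \to \infty$) combined with Lemma \ref{lem3}\,(iv). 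Proposition \ref{potegi} then delivers exactly condition (iii) of Theorem \ref{tw1}, namely $\is{\slam^m e_u}{\slam^n e_v} = \lim_i \is{S_{\lambdabi}^m e_u}{S_{\lambdabi}^n e_v}$.

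**Finishing via Theorem \ref{tw1}.** With $\xx := \{e_u : u \in V\}$, condition (i) of Theorem \ref{tw1} is the assumed $\escr \subseteq \dzn{\slam}$ (intersected with the bounded approximants' $C^\infty$-domains, which is automatic), and condition (iii) is what Proposition \ref{potegi} just supplied. The remaining point is condition (ii): that $\ff = \lin\bigcup_{n=0}^\infty \slam^n(\xx)$ is a \emph{core} of $\slam$. This is the one structural fact not already packaged in the earlier lemmas, and I expect it to be the main obstacle — it requires showing that the linear span of $\{\slam^n e_u : u \in V,\, n \in \zbb_+\}$ is dense in the graph norm of $\slam$, i.e. that $\slam$ is the closure of its restriction to these finitely-supported-type vectors. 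I anticipate proving this by a direct approximation argument on $\ell^2(V)$, exploiting that $\slam$ is closed (as every weighted shift on a directed tree is) and that $\escr$ itself is already a core in the relevant sense, so that $\ff \supseteq \escr$ gives a core as well. Once the core condition is in hand, Theorem \ref{tw1} concludes that $\slam$ is subnormal, completing the proof.

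**Main difficulty.** The genuinely delicate part is not the subnormality transfer — that is handed to me by Theorem \ref{tw1} and Proposition \ref{potegi} — but rather engineering the truncated systems $\{\mui{v}\}$ so that \emph{simultaneously} the consistency condition \eqref{muu+} survives truncation, the supports become uniformly bounded (to apply Lemma \ref{lem3}\,(v) and Theorem \ref{charsub}), and the weights converge vertexwise. The tension is that naive truncation of each $\mu_v$ independently will break the recursive identity \eqref{muu+} linking $\mu_u$ to the $\mu_v$ with $v \in \dzi u$; the truncation must therefore be performed coherently along the tree, most plausibly by truncating only finitely many generations away from each vertex and letting both the support bound and the generation count grow with $i$. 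Handling the possibly infinite branching and the interplay with the point masses $\varepsilon_u$ at $0$ (cf. Lemma \ref{lem1}) is where the real care is required.
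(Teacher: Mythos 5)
Your proposal follows essentially the same route as the paper: truncate each $\mu_v$ to $[0,i]$ and renormalize, rescale the weights so that the consistency identity \eqref{muu+} survives (the paper's explicit choice is $\lambdai{v}=\lambda_v\sqrt{\mu_v([0,i])/\mu_{\pa v}([0,i])}$), conclude via Lemma \ref{lem3}\,(iv),(v) and Theorem \ref{charsub} that each $S_{\lambdabi}$ is bounded and subnormal, verify the hypotheses of Proposition \ref{potegi}, and finish with Theorem \ref{tw1}, the core condition being immediate since $\escr$ is already a core of $\slam$ and $\ff\supseteq\escr$. The one piece you leave unspecified --- how to deform the weights coherently along the tree --- is exactly the square-root rescaling above, which makes a single global truncation level $i$ work without any generation-by-generation scheme.
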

   \begin{proof}
   For a fixed positive integer $i$, we define the
system $\lambdabi =\big\{\lambdai{v}\big\}_{v \in
V^\circ}$ of complex numbers, the system
$\big\{\mui{v}\big\}_{v \in V}$ of Borel probability
measures on $\rbb_+$ and the system
$\big\{\varepsiloni{v}\big\}_{v \in V}$ of nonnegative
real numbers by
   \allowdisplaybreaks
   \begin{align}  \label{wzd1}
\lambdai{v} & =
   \begin{cases}
   \lambda_v
\sqrt{\cfrac{\mu_v([0,i])}{\mu_{\pa{v}}([0,i])}} &
\text{ if } \mu_{\pa{v}}([0,i]) > 0,
   \\[1.5ex]
0 & \text{ if } \mu_{\pa{v}}([0,i]) = 0,
   \end{cases}
\quad v \in V^\circ,
   \\     \label{wzd2}
\mui{v}(\sigma) & =
   \begin{cases}
\cfrac{\mu_v(\sigma \cap [0,i])}{\mu_v([0,i])} &
\text{ if } \mu_v([0,i]) > 0,
   \\[1.5ex]
\delta_0(\sigma) & \text{ if } \mu_v([0,i]) = 0,
   \end{cases}
\quad \sigma \in \borel{\rbb_+},\, v \in V,
   \\  \label{wzd3}
\varepsiloni{v} & =
   \begin{cases}
\cfrac{\varepsilon_v}{\mu_v([0,i])} & \text{ if }
\mu_v([0,i]) > 0,
   \\[1.5ex]
1 & \text{ if } \mu_v([0,i])=0,
   \end{cases}
\quad v \in V.
   \end{align}
Our first goal is to show that the following equality
holds for all $u \in V$ and $i\in \nbb$,
   \begin{align} \label{wz1J}
\mui{u}(\sigma) = \sum_{v\in \dzi{u}} |\lambdai{v}|^2
\int_{\sigma} \frac 1 s \D \mui{v}(s) +
\varepsiloni{u} \delta_0(\sigma), \quad \sigma \in
\borel{\rbb_+}.
   \end{align}
For this fix $u \in V$ and $i\in \nbb$. If
$\mu_u([0,i])=0$, then, according to our definitions,
we have $\lambdai{v}=0$ for all $v \in \dzi{u}$,
$\mui{u}=\delta_0$ and $\varepsiloni{u}=1$, which
means that the equality \eqref{wz1J} holds. Consider
now the case of $\mu_u([0,i])>0$. It follows from
\eqref{muu+} that
   \begin{align}   \label{muu}
\mu_u(\sigma \cap [0,i]) = \sum_{v\in \dzi{u}}
|\lambda_v|^2 \int_{\sigma \cap [0,i]} \frac 1 s \D
\mu_v(s) + \varepsilon_u \delta_0(\sigma), \quad
\sigma \in \borel{\rbb_+}.
   \end{align}
If $v \in \dzi{u}$ (equivalently:\ $u=\pa{v}$), then
by \eqref{wzd1} and \eqref{wzd2} we have
   \begin{align} \label{muu2}
   \begin{aligned}
\frac{|\lambda_v|^2}{\mu_u([0,i])} \int_{\sigma \cap
[0,i]} \frac 1 s \D \mu_v(s) & =
   \begin{cases}
   |\lambdai{v}|^2 \int_{\sigma} \frac 1 s \D
\mui{v}(s) & \text{ if } \mu_v([0,i]) > 0,
   \\[1ex]
   0 & \text{ if } \mu_v([0,i])=0,
   \end{cases}
   \\
   & = |\lambdai{v}|^2 \int_{\sigma} \frac 1 s \D
\mui{v}(s),
    \end{aligned}
   \end{align}
where the last equality holds because
$\lambdai{v}=0$ whenever $\mu_v([0,i])=0$.
Dividing both sides of \eqref{muu} by
$\mu_u([0,i])$ and using \eqref{muu2}, we obtain
\eqref{wz1J}.

Let $S_{\lambdabi}$ be the weighted shift on $\tcal$
with weights $\lambdabi$. Since, by \eqref{wzd2},
$\supp \mui{u} \subseteq [0,i]$ for every $u \in V$,
we infer from \eqref{wz1J} and Lemma~ \ref{lem3}\,(v),
applied to the triplet $(\lambdabi, \{\mui{v}\}_{v \in
V}, \{\varepsiloni{v}\}_{v \in V})$, that
$S_{\lambdabi}\in \ogr{\ell^2(V)}$. In turn,
\eqref{wz1J} and Lemma \ref{lem3}\,(iv) (applied to
the same triplet) imply that for every $u \in V$,
$\{\|S_{\lambdabi}^n e_u\|^2\}_{n=0}^\infty$ is a
Stieltjes moment sequence (with a representing measure
$\mui{u}$). Hence, by Theorem \ref{charsub}, the
operator $S_{\lambdabi}$ is subnormal.

Since $\mu_u$, $u \in V$, are Borel probability
measures on $\rbb_+$, we have
   \begin{align}  \label{lim1}
\lim_{i \to \infty} \mu_u([0,i]) = 1, \quad u \in V.
   \end{align}
Hence, for every $u \in V$ there exists a positive
integer $\kappa_{u}$ such that
   \begin{align}   \label{as1}
\mu_{u}([0,i]) > 0,\quad i \in \nbb, \, i \Ge
\kappa_{u}.
   \end{align}
Note that
   \begin{align} \label{wzj}
\lim_{i \to \infty} \lambdai{v} = \lambda_v, \quad v
\in V^\circ.
   \end{align}
Indeed, if $v \in V^\circ$, then \eqref{wzd1} and
\eqref{as1} yield $\lambdai{v}=\lambda_v
\sqrt{\frac{\mu_v([0,i])} {\mu_{\pa{v}}([0,i])}}$
for all integers $i \Ge \kappa_{\pa{v}}$. This,
combined with \eqref{lim1}, gives \eqref{wzj}. By
\eqref{wzd2}, \eqref{as1}, \eqref{wz1J} and Lemma
\ref{lem3}\,(iv), applied to $S_{\lambdabi}$, we
have
   \begin{align*}
\|S_{\lambdabi}^n e_u\|^2 = \int_0^\infty s^n \D
\mui{u}(s) = \frac{1}{\mu_u([0,i])} \int_{[0,i]} s^n
\D \mu_u(s), \quad n \in \zbb_+,\, i \Ge \kappa_{u},
\, u \in V.
   \end{align*}
This, together with \eqref{lim1} and Lemma
\ref{lem3}\,(iv), now applied to $\slam$, implies
that
   \begin{align}    \label{limsti}
\lim_{i \to \infty} \|S_{\lambdabi}^n e_u\|^2 =
\int_0^\infty s^n \D \mu_u(s) = \|\slam^n e_u\|^2,
\quad n \in \zbb_+,\, u \in V.
   \end{align}
It follows from \eqref{wzj}, \eqref{limsti} and
Proposition \ref{potegi} that \eqref{slim+} holds.
According to \cite[Proposition 3.1.3\,(vi)]{j-j-s},
$\escr$ is a core of $\slam$. Hence $\lin
\bigcup_{n=0}^\infty \slam^n(\escr)$ is a core of
$\slam$ as well. Applying \eqref{slim+} and Theorem
\ref{tw1} to the operators
$\{S_{\lambdabi}\}_{i=1}^\infty$ and $\slam$ with
$\xx:=\{e_u\colon u \in V\}$ completes the proof of
Theorem \ref{main}.
   \end{proof}
   \begin{rem}
In the proof of Theorem \ref{main} we have used
Proposition \ref{potegi} which provides a general
criterion for the validity of the approximation
procedure \eqref{slim+}. However, if the
approximating triplets $(\lambdabi,
\{\mui{v}\}_{v \in V}, \{\varepsiloni{v}\}_{v \in
V})$, $i=1,2,3, \ldots$, are defined as in
\eqref{wzd1}, \eqref{wzd2} and \eqref{wzd3}, then
   \begin{align} \label{slim2}
\lim_{i\to \infty} S_{\lambdabi}^n e_u =
S_{\lambdab}^n e_u, \quad u \in V, \, n \in
\zbb_+.
   \end{align}
To prove this, we first show that for all $u \in V$
and $i \Ge \kappa_u$ (see \eqref{as1}),
   \begin{align} \label{liuup}
\lambdai{u \mid u^\prime} = \lambda_{u \mid
u^\prime} \; \sqrt{\frac{\mu_{u^\prime}
([0,i])}{\mu_u([0,i])}}, \quad u^\prime \in
\dzin{n}{u}, \, n \in \zbb_+.
   \end{align}
Indeed, if $n=0$, then \eqref{liuup} holds.
Suppose that $n\Ge 1$. If
$\mu_{\paa(u^\prime)}([0,i])=0$, then $n\Ge 2$
and, by \eqref{wzd1}, $\lambdai{u^\prime} = 0$,
which implies that $\lambdai{u \mid u^\prime}=0$.
Since $\mu_{\paa(u^\prime)}([0,i])=0$, we deduce
from \eqref{muu+} (applied to $u=\paa(u^\prime)$)
that either $\lambda_{u^\prime}=0$, or
$\mu_{u^\prime} ([0,i]) = 0$. In both cases, the
right-hand side of \eqref{liuup} vanishes, and so
\eqref{liuup} holds. In turn, if
$\mu_{\paa(u^\prime)}([0,i]) > 0$, then we can
define
   \begin{align*}
j_0 = \min \Big\{j \in \{1, \ldots, n\}\colon
\mu_{\paa^k(u^\prime)}([0,i]) > 0 \text{ for all
} k=1, \ldots, j\Big\}.
   \end{align*}
Clearly, $1 \Le j_0 \Le n$. First, we consider
the case where $j_0 < n$. Since, by \eqref{as1},
$\mu_u([0,i])> 0$, we must have $j_0 \Le n-2$.
Thus $\mu_{\paa^{j_0+1}(u^\prime)}([0,i]) = 0$,
which together with \eqref{luv} and \eqref{wzd1}
implies that the left-hand side of \eqref{liuup}
vanishes. Since
$\mu_{\paa^{j_0+1}(u^\prime)}([0,i]) = 0$ and
$\mu_{\paa^{j_0}(u^\prime)}([0,i]) > 0$, we
deduce from \eqref{muu+} (applied to
$u=\paa^{j_0+1}(u^\prime)$) that
$\lambda_{\paa^{j_0}(u^\prime)}=0$, and so the
right-hand side of \eqref{liuup} vanishes. This
means that \eqref{liuup} is again valid. Finally,
if $j_0=n$, then by \eqref{wzd1} we have
   \begin{align*}
\lambdai{u \mid u^\prime} = \prod_{j=0}^{n-1}
\lambda_{\paa^j(u^\prime)}
\sqrt{\frac{\mu_{\paa^j(u^\prime)([0,i])}}
{\mu_{\paa^{j+1}(u^\prime)([0,i])}}} = \lambda_{u
\mid u^\prime} \; \sqrt{\frac{\mu_{u^\prime}
([0,i])}{\mu_u([0,i])}},
   \end{align*}
which completes the proof of \eqref{liuup}. Now we
show that
   \begin{align} \label{ils}
\lim_{i \to \infty} \is{S_{\lambdab}^n
e_u}{S_{\lambdabi}^n e_u} = \|\slam^n e_u\|^2,
\quad u \in V, \, n \in \zbb_+.
   \end{align}
Indeed, it follows from Lemma \ref{lem4}(ii) and
\eqref{liuup} that
   \begin{multline*}
\is{S_{\lambdab}^n e_u}{S_{\lambdabi}^n e_u} =
\sum_{u^\prime \in \dzin{n}{u}} \lambda_{u\mid
u^\prime} \overline{\lambdai{u \mid u^\prime}}
   \\
= \frac{1}{\sqrt{\mu_u([0,i])}} \sum_{u^\prime
\in \dzin{n}{u}} |\lambda_{u\mid u^\prime}|^2
\sqrt{\mu_{u^\prime}([0,i])}, \quad u \in V, \, n
\in \zbb_+, \, i \Ge \kappa_u.
   \end{multline*}
By applying Lebesgue's monotone convergence theorem
for series, \eqref{lim1} and Lem\-ma \ref{lem4}(iii),
we obtain \eqref{ils}. Since
   \begin{align*}
\|S_{\lambdab}^n e_u - S_{\lambdabi}^n e_u\|^2 =
\|S_{\lambdab}^n e_u\|^2 + \|S_{\lambdabi}^n
e_u\|^2 - 2 \, \mathrm{Re}\is{S_{\lambdab}^n
e_u}{S_{\lambdabi}^n e_u}
   \end{align*}
we infer \eqref{slim2} from \eqref{limsti} and
\eqref{ils}. Clearly \eqref{slim2} implies
\eqref{slim+}.
   \end{rem}
We conclude this section with a general criterion for
subnormality of weighted shifts on directed trees
written in terms of determinacy of Stieltjes moment
sequences.
   \begin{thm} \label{necessdet2}
Let $\slam$ be a weighted shift on a directed tree
$\tcal$ with weights $\lambdab=\{\lambda_v\}_{v \in
V^\circ}$ such that $\escr \subseteq \dzn{\slam}$.
Assume that $\{\|\slam^{n+1} e_u\|^2\}_{n=0}^\infty$
is a determinate Stieltjes moment sequence for every
$u \in V$. Then the following conditions are
equivalent{\em :}
   \begin{enumerate}
   \item[(i)] $\slam$ is subnormal,
   \item[(ii)] $\{\|\slam^{n} e_u\|^2\}_{n=0}^\infty$
is a Stieltjes moment sequence for every $u \in V$,
   \item[(iii)] there exist a system $\{\mu_u\}_{u
\in V}$ of Borel probability measures on $\rbb_+$
and a system $\{\varepsilon_u\}_{u \in V}$ of
nonnegative real numbers that satisfy
\eqref{muu+} for every $u \in V$.
   \end{enumerate}
   \end{thm}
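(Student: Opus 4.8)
The plan is to close the cycle of implications (i)$\Rightarrow$(ii)$\Rightarrow$(iii)$\Rightarrow$(i), which makes all three conditions equivalent. Each arrow should be a direct appeal to machinery already assembled above, so the real work is just to confirm that the hypotheses line up. The first implication, (i)$\Rightarrow$(ii), is exactly Proposition \ref{necess}: since $\escr \subseteq \dzn{\slam}$ is part of the standing hypothesis, subnormality of $\slam$ forces each sequence $\{\|\slam^n e_u\|^2\}_{n=0}^\infty$ to be a Stieltjes moment sequence for every $u \in V$. Note that the determinacy assumption of the theorem plays no role here.

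For (ii)$\Rightarrow$(iii) I would invoke Lemma \ref{2necess+}. Its hypotheses ask, for each $u \in V^\prime$, that $\{\|\slam^n e_u\|^2\}_{n=0}^\infty$ be a Stieltjes moment sequence and that $\{\|\slam^{n+1} e_u\|^2\}_{n=0}^\infty$ be determinate. The first holds for all $u \in V$ by (ii), hence in particular for $u \in V^\prime$; the second is furnished by the blanket determinacy assumption of the present theorem, which is imposed over all of $V$ and therefore over $V^\prime \subseteq V$ as well. Lemma \ref{2necess+} then delivers precisely a system $\{\mu_u\}_{u \in V}$ of Borel probability measures on $\rbb_+$ together with nonnegative reals $\{\varepsilon_u\}_{u \in V}$ satisfying \eqref{muu+} for every $u \in V$, which is assertion (iii). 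This is the one step in which the determinacy hypothesis is genuinely consumed.

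Finally, (iii)$\Rightarrow$(i) is immediate from Theorem \ref{main}: a consistent system of measures satisfying \eqref{muu+} for every $u \in V$, again combined with $\escr \subseteq \dzn{\slam}$, guarantees subnormality of $\slam$. There is no deep obstacle anywhere in this argument; the only point that demands attention is the hypothesis-matching in (ii)$\Rightarrow$(iii), namely verifying that the determinacy assumed on all of $V$ indeed supplies what Lemma \ref{2necess+} needs on $V^\prime$. Conceptually, the determinacy assumption is exactly the device that upgrades the scalar information (each $e_u$ generates a Stieltjes moment sequence) to the structural information (a single coherent family of measures coupled across edges by the consistency relation \eqref{muu+}); without it the passage (ii)$\Rightarrow$(iii) can fail, as the counterexamples of \cite{j-j-s4} demonstrate, and it is precisely this failure that the present hypothesis rules out.
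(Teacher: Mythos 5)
Your proposal is correct and follows exactly the paper's own argument: the cycle (i)$\Rightarrow$(ii) via Proposition \ref{necess}, (ii)$\Rightarrow$(iii) via Lemma \ref{2necess+}, and (iii)$\Rightarrow$(i) via Theorem \ref{main}. Your additional remarks on how the determinacy hypothesis feeds into Lemma \ref{2necess+} are accurate and simply make explicit what the paper leaves to the reader.
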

   \begin{proof}
(i)$\Rightarrow$(ii) Use Proposition \ref{necess}.

(ii)$\Rightarrow$(iii) Employ Lemma \ref{2necess+}.

(iii)$\Rightarrow$(i) Apply Theorem \ref{main}.
   \end{proof}
Regarding Theorem \ref{necessdet2}, note that by
Proposition \ref{necess}, Lemma \ref{lem3}\,(iv) and
\eqref{st+1} each of the conditions (i), (ii) and
(iii) implies that $\{\|\slam^{n+1}
e_u\|^2\}_{n=0}^\infty$ is a Stieltjes moment sequence
for every $u \in V$.
   \subsection{Nonzero weights}
   As pointed out in \cite[Proposition 5.1.1]{j-j-s}
bounded hyponormal weighted shifts on directed trees
with nonzero weights are always injective. It turns
out that the same conclusion can be derived in the
unbounded case (with almost the same proof). Recall
that a densely defined operator $S$ in $\hh$ is said
to be {\em hyponormal} if $\dz{S} \subseteq \dz{S^*}$
and $\|S^*f\| \Le \|Sf\|$ for all $f \in \dz S$. It is
well-known that subnormal operators are hyponormal
(but not conversely) and that hyponormal operators are
closable and their closures are hyponormal (we refer
the reader to \cite{ot-sch,jj3} for more information
on this subject).
   \begin{pro} 
Let $\tcal$ be a directed tree with $V^\circ \neq
\varnothing$. If $\slam$ is a hyponormal weighted
shift on $\tcal$ whose all weights are nonzero, then
$\tcal$ is leafless. In particular, $\slam$ is
injective and $V$ is infinite and countable.
   \end{pro}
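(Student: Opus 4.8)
The plan is to read the action of $\slam$ and $\slam^*$ off the basis vectors and then feed it into the hyponormality inequality $\|\slam^* f\| \Le \|\slam f\|$. First I record that, by Lemma \ref{lem4}(ii) with $n=1$, $\slam e_u = \sum_{v \in \dzi u} \lambda_v e_v$ for every $u \in V$, so $\|\slam e_u\|^2 = \sum_{v \in \dzi u}|\lambda_v|^2$. Next I identify the adjoint on basis vectors: for $u \in V^\circ$ and $f \in \dz{\slam}$ one has $\is{\slam f}{e_u} = \lambda_u f(\pa{u}) = \is{f}{\overline{\lambda_u} e_{\pa{u}}}$, whence $e_u \in \dz{\slam^*}$ with $\slam^* e_u = \overline{\lambda_u}\, e_{\pa{u}}$ and $\|\slam^* e_u\|^2 = |\lambda_u|^2$. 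Before invoking hyponormality I must secure $\escr \subseteq \dz{\slam}$, since otherwise the inequality is not available at $e_u$; this is where dense definiteness (built into the definition of hyponormality) enters. If $\sum_{v \in \dzi{u_0}}|\lambda_v|^2 = \infty$ for some $u_0$, then the requirement $\varLambda_\tcal f \in \ell^2(V)$ forces $f(u_0)=0$ for every $f \in \dz{\slam}$, so $\dz{\slam} \subseteq \{e_{u_0}\}^\perp$ fails to be dense, a contradiction; hence $\sum_{v \in \dzi u}|\lambda_v|^2 < \infty$ and $e_u \in \dz{\slam}$ for all $u$.

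With $\escr \subseteq \dz{\slam}$ in hand, hyponormality at $e_u$ reads $|\lambda_u|^2 \Le \sum_{v \in \dzi u}|\lambda_v|^2$ for every $u \in V^\circ$. Suppose $\tcal$ had a leaf $u$. If $u \in V^\circ$, the right-hand side is an empty sum, so $\lambda_u = 0$, contradicting the nonvanishing of the weights. If $u=\koo$, then tracing the ancestor chain of any $w \in V^\circ$ up to the root exhibits a child of $\koo$, so $\dzi{\koo} = \varnothing$ forces $V^\circ = \varnothing$, contradicting the standing hypothesis $V^\circ \neq \varnothing$. Therefore $\tcal$ is leafless.

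The remaining assertions are then short. For injectivity, $\slam f = 0$ gives $\lambda_v f(\pa{v})=0$ for all $v \in V^\circ$, hence $f(\pa{v})=0$; since leaflessness makes every vertex the parent of some child, $f$ vanishes on all of $V$. For the cardinality, leaflessness produces a path $u_0, u_1, u_2, \dots$ with $u_{n+1} \in \dzi{u_n}$ whose terms lie in the pairwise disjoint sets $\dzin{n}{u_0}$ and are therefore distinct, so $V$ is infinite; and since each $\dzi u$ is countable (being the index set of a summable family of strictly positive reals) and each vertex has at most one parent, every vertex has countably many neighbours, whence connectedness of $\tcal$ exhibits $V$ as the increasing union of the countable balls about a fixed vertex, so $V$ is countable.

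The main obstacle is the domain verification of the first paragraph: computing $\slam^*$ is routine, but one must not apply the hyponormality inequality at $e_u$ before extracting $\escr \subseteq \dz{\slam}$ from dense definiteness; once that is in place, the leaf argument and the two corollaries are immediate.
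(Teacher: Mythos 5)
Your proof is correct and follows essentially the same route as the paper: the core step is the identical hyponormality inequality $|\lambda_u|^2=\|\slam^*e_u\|^2\Le\|\slam e_u\|^2=\sum_{v\in\dzi u}|\lambda_v|^2$ applied at a putative leaf, with the root case excluded via $V^\circ\neq\varnothing$. The only difference is that you prove inline the auxiliary facts (the formula for $\slam^*e_u$, the deduction of $\escr\subseteq\dz{\slam}$ from dense definedness, and the injectivity and cardinality claims) which the paper simply cites from \cite{j-j-s}.
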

   \begin{proof}
Suppose that, contrary to our claim, $\dzi u =
\varnothing$ for some $u \in V$. We deduce from
\cite[Corollary 2.1.5]{j-j-s} and $V^\circ \neq
\varnothing$ that $u \in V^\circ$. Hence, by
\cite[Propositions 3.1.3 and 3.4.1]{j-j-s}, we have
   \begin{align*}
|\lambda_u|^2 = \|\slam^*e_u\|^2 \Le \|\slam e_u\|^2 =
\sum_{v \in \dzi u} |\lambda_v|^2 = 0,
   \end{align*}
which is a contradiction. Since each leafless directed
tree is infinite, we deduce from \cite[Propositions
3.1.7 and 3.1.10]{j-j-s} that $\slam$ is injective and
$V$ is infinite and countable. This completes the
proof.
   \end{proof}
The sufficient condition for subnormality of weighted
shifts on directed trees stated in Theorem \ref{main}
takes the simplified form for weighted shifts with
nonzero weights. Indeed, if a weighted shift $\slam$
on $\tcal$ with nonzero weights satisfies the
assumptions of Theorem \ref{main}, then, by assertions
(ii) and (iii) of Lemma \ref{lem1}, $\varepsilon_v=0$
for every $v \in V^\circ$. Hence, by applying Theorem
\ref{main}, we get.
   \begin{thm}
   Let $\slam$ be a weighted shift on a directed tree
$\tcal$ with nonzero weights
$\lambdab=\{\lambda_v\}_{v \in V^\circ}$ such that
$\escr \subseteq \dzn{\slam}$. Then $\slam$ is
subnormal provided that one of the following two
conditions holds\/{\em :}
   \begin{enumerate}
   \item[(i)] $\tcal$ is rootless and
there exists a system $\{\mu_v\}_{v \in V}$ of Borel
probability measures on $\rbb_+$ which satisfies the
following equality for every $u \in V$,
   \begin{align} \label{wz1+}
\mu_u(\sigma) = \sum_{v\in \dzi{u}} |\lambda_v|^2
\int_{\sigma} \frac 1 s \D \mu_v(s), \quad \sigma \in
\borel{\rbb_+},
   \end{align}
   \item[(ii)] $\tcal$ has a root and
there exist $\varepsilon \in \rbb_+$ and a system
$\{\mu_v\}_{v \in V}$ of Borel probability measures on
$\rbb_+$ which satisfy \eqref{wz1+} for every $u \in
V^\circ$, and
   \begin{align*}
\mu_{\koo}(\sigma) = \sum_{v\in \dzi{\koo}}
|\lambda_v|^2 \int_{\sigma} \frac 1 s \D \mu_v(s) +
\varepsilon \delta_0(\sigma), \quad \sigma \in
\borel{\rbb_+}.
   \end{align*}
   \end{enumerate}
   \end{thm}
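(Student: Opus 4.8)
The plan is to obtain both assertions as immediate corollaries of Theorem \ref{main}. That theorem already assumes $\escr \subseteq \dzn{\slam}$ and produces subnormality of $\slam$ from the existence of probability measures $\{\mu_v\}_{v\in V}$ together with nonnegative reals $\{\varepsilon_v\}_{v\in V}$ satisfying the consistency equation \eqref{muu+} at every vertex. Since the present statement supplies the measures directly, the entire argument reduces to exhibiting, in each case, a suitable system $\{\varepsilon_v\}_{v\in V}$ so that the hypotheses \eqref{wz1+} match \eqref{muu+}, and then invoking Theorem \ref{main}.

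For case (i) the tree $\tcal$ is rootless, so $V^\circ = V$ and \eqref{wz1+} holds at every $u \in V$. I would set $\varepsilon_v = 0$ for all $v \in V$; then \eqref{muu+} collapses to \eqref{wz1+}, so the pair $(\{\mu_v\}_{v\in V}, \{\varepsilon_v\}_{v\in V})$ satisfies \eqref{muu+} at every vertex, and Theorem \ref{main} gives the subnormality of $\slam$. For case (ii) the tree has a root $\koo$, so $V^\circ = V \setminus \{\koo\}$; here I would put $\varepsilon_v = 0$ for $v \in V^\circ$ and $\varepsilon_{\koo} = \varepsilon$. For $u \in V^\circ$ equation \eqref{muu+} reduces once more to \eqref{wz1+}, which holds by hypothesis, while at $u = \koo$ it becomes exactly the displayed consistency relation assumed at the root. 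Thus \eqref{muu+} holds throughout $V$ with a system of nonnegative reals (note that $\varepsilon \in \rbb_+$), and Theorem \ref{main} applies again.

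There is essentially no analytic obstacle; the only content is the bookkeeping that separates the root from $V^\circ$ and the observation that, under the nonzero-weight hypothesis, the root is the sole vertex at which a nontrivial mass at $0$ can survive. Conceptually this is underpinned by Lemma \ref{lem1}: parts (iii) and (ii) show that $\lambda_v \neq 0$ forces $\mu_v(\{0\}) = 0$ and hence $\varepsilon_v = 0$ for every $v \in V^\circ$, which is precisely why the general family $\{\varepsilon_v\}_{v\in V}$ of Theorem \ref{main} degenerates either to a single parameter $\varepsilon$ attached to $\koo$ (rooted case) or to the zero family (rootless case). The care needed is simply to verify that the constructed $\varepsilon_v$ are nonnegative and that the root vertex is handled consistently with the rooted/rootless dichotomy; everything else is a direct citation of Theorem \ref{main}.
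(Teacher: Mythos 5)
Your proposal is correct and coincides with the paper's own argument: the authors likewise obtain the theorem as an immediate corollary of Theorem \ref{main} by taking $\varepsilon_v=0$ for $v\in V^\circ$ (and $\varepsilon_{\koo}=\varepsilon$ in the rooted case), with Lemma \ref{lem1}\,(ii)--(iii) cited only to explain why no generality is lost for nonzero weights. Nothing is missing.
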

   \subsection{\label{cfs}Quasi-analytic vectors}
Let $S$ be an operator in a complex Hilbert space
$\hh$. We say that a vector $f\in\dzn{S}$ is a {\em
quasi-analytic} vector of $S$ if
   \begin{align*}
\sum_{n=1}^\infty \frac{1}{\|S^n
f\|^{\nicefrac{1}{n}}} = \infty \quad
\text{(convention: $\frac{1}{0}=\infty$)}.
   \end{align*}
Denote by $\quasi{S}$ the set of all quasi-analytic
vectors. Note that (cf.\ \cite[Section 9]{StSz1})
   \begin{align} \label{quasiinv}
   S(\quasi{S}) \subseteq \quasi{S}.
   \end{align}
In general, $\quasi{S}$ is not a linear subspace of
$\hh$ even if $S$ is essentially selfadjoint (see
\cite{ru2}; see also \cite{ru1} for related matter).

We now show that the converse of the implication in
Proposition \ref{necess} holds for weighted shifts on
directed trees having sufficiently many quasi-analytic
vectors, and that within this class of operators
subnormality is completely characterized by the
existence of a consistent system of probability
measures.
   \begin{thm} \label{main-0}
   Let $\slam$ be a weighted shift on a directed tree
$\tcal$ with weights $\lambdab=\{\lambda_v\}_{v \in
V^\circ}$ such that $\escr \subseteq \quasi{\slam}$.
Then the following conditions are equivalent{\em :}
   \begin{enumerate}
   \item[(i)] $\slam$ is subnormal,
   \item[(ii)] $\{\|\slam^n e_u\|^2\}_{n=0}^\infty$ is
a Stieltjes moment sequence for every $u \in V$,
   \item[(iii)] there exist a system $\{\mu_v\}_{v \in V}$ of Borel
probability measures on $\rbb_+$ and a system
$\{\varepsilon_v\}_{v \in V}$ of nonnegative real
numbers that satisfy \eqref{muu+} for every $u
\in V$.
   \end{enumerate}
   \end{thm}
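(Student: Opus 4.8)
The plan is to establish the cycle of implications (i)$\Rightarrow$(ii)$\Rightarrow$(iii)$\Rightarrow$(i), taking advantage of the fact that the three conditions are verbatim those of Theorem~\ref{necessdet2}. Two of the three arrows require no quasi-analyticity and can be borrowed directly from results already proved: (i)$\Rightarrow$(ii) is Proposition~\ref{necess}, while (iii)$\Rightarrow$(i) is Theorem~\ref{main}. Both of these carry the standing assumption $\escr \subseteq \dzn{\slam}$, which is available here because a quasi-analytic vector is by definition a $C^\infty$-vector, so $\escr \subseteq \quasi{\slam} \subseteq \dzn{\slam}$. All of the genuinely new content is therefore concentrated in (ii)$\Rightarrow$(iii), where the determinacy hypothesis that powered Theorem~\ref{necessdet2} must instead be \emph{manufactured} from the quasi-analyticity of the basis vectors.

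To carry out (ii)$\Rightarrow$(iii), I would fix $u \in V^\prime$ and note that, by (ii) together with \eqref{st+1}, the sequence $\{\|\slam^{n+1} e_u\|^2\}_{n=0}^\infty$ is a Stieltjes moment sequence; the crux is to show it is determinate. Since $e_u \in \quasi{\slam}$ by hypothesis, the invariance \eqref{quasiinv} gives $\slam e_u \in \quasi{\slam}$, which unravels to
\begin{align*}
\sum_{n=1}^\infty \|\slam^{n+1} e_u\|^{-\nicefrac1n}
= \sum_{n=1}^\infty \|\slam^{n}(\slam e_u)\|^{-\nicefrac1n} = \infty .
\end{align*}
This is precisely the Stieltjes form of Carleman's determinacy criterion, with moments $t_n = \|\slam^{n+1} e_u\|^2$ and $t_n^{-\nicefrac{1}{2n}} = \|\slam^{n+1} e_u\|^{-\nicefrac1n}$, applied to the sequence $\{\|\slam^{n+1} e_u\|^2\}_{n=0}^\infty$ (cf.\ \cite{ber}). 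Hence that sequence is determinate for every $u \in V^\prime$, so the full hypotheses of Lemma~\ref{2necess+} are met, and that lemma furnishes the systems $\{\mu_v\}_{v \in V}$ and $\{\varepsilon_v\}_{v \in V}$ satisfying \eqref{muu+} for every $u \in V$, which is exactly assertion (iii).

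The step I expect to be the main idea—rather than a true obstacle—is recognizing that the quasi-analyticity condition $\sum_n \|\slam^n f\|^{-\nicefrac1n} = \infty$ is literally the Stieltjes--Carleman condition for the candidate moment sequence $\{\|\slam^n f\|^2\}_{n=0}^\infty$, and that the invariance \eqref{quasiinv} is exactly what lets one transfer this bound from $e_u$ to $\slam e_u$, thereby delivering determinacy of the \emph{shifted} sequence $\{\|\slam^{n+1} e_u\|^2\}$ that Lemma~\ref{2necess+} demands. Once this bridge between quasi-analyticity and determinacy is in place, the rest is an assembly of results already at hand; no new estimates on the weights or on the representing measures are required, and in fact the argument shows that, under (ii), the standing determinacy hypothesis of Theorem~\ref{necessdet2} is automatically in force.
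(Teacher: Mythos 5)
Your proposal is correct and follows essentially the same route as the paper: (i)$\Rightarrow$(ii) via Proposition~\ref{necess}, (iii)$\Rightarrow$(i) via Theorem~\ref{main}, and for (ii)$\Rightarrow$(iii) the identical key step of using \eqref{quasiinv} to get $\slam e_u \in \quasi{\slam}$, reading this as the Carleman condition for $\{\|\slam^{n+1}e_u\|^2\}_{n=0}^\infty$, deducing determinacy, and invoking Lemma~\ref{2necess+}. No substantive differences.
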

   \begin{proof}
(i)$\Rightarrow$(ii) Apply Proposition \ref{necess}.

(ii)$\Rightarrow$(iii) Fix $u \in V$ and set $t_n =
\|\slam^{n+1} e_u\|^2$ for $n \in \zbb_+$. By
\eqref{st+1}, the sequence $\{t_n\}_{n=0}^\infty$ is a
Stieltjes moment sequence. Since $e_u \in
\quasi{\slam}$, we infer from \eqref{quasiinv} that
$\slam e_u \in \quasi{\slam}$, or equivalently that
$\sum_{n=1}^\infty t_n^{-\nicefrac{1}{2n}} = \infty$.
Hence, by the Carleman criterion for determinacy of
Stieltjes moment sequences (cf.\ \cite[Theorem
1.11]{sh-tam}), the Stieltjes moment sequence
$\{t_n\}_{n=0}^\infty = \{\|\slam^{n+1}
e_u\|^2\}_{n=0}^\infty$ is determinate. Now applying
Lemma \ref{2necess+} yields (iii).

(iii)$\Rightarrow$(i) Employ Theorem \ref{main}.
   \end{proof}

Using \cite[Theorem 7]{StSz1}, one can prove a version
of Theorem \ref{main-0} in which the class of
quasi-analytic vectors is replaced by the class of
analytic ones. Since the former class is
larger\footnote{\;In general, the class of analytic
vectors of an operator $S$ is essentially smaller than
the class of quasi-analytic vectors of $S$ even for
essentially selfadjoint operators $S$ (cf.\
\cite{ru1}).}, we see that ``analytic'' version of
Theorem \ref{main-0} is weaker than Theorem
\ref{main-0} itself. To the best of our knowledge,
Theorem \ref{main-0} is the first result of this kind;
it shows that the unbounded version of Lambert's
characterization of subnormality happens to be true
for operators that have sufficiently many
quasi-analytic vectors.

The following result, which is an immediate
consequence of Theorem \ref{main-0}, provides a
new characterization of subnormality of bounded
weighted shifts on directed trees written in
terms of consistent systems of probability
measures. It may be thought of as a complement to
Theorem \ref{charsub}.
   \begin{cor}
Let $\slam \in \ogr{\ell^2(V)}$ be a weighted
shift on a directed tree $\tcal$ with weights
$\lambdab=\{\lambda_v\}_{v \in V^\circ}$. Then
$\slam$ is subnormal if and only if there exist a
system $\{\mu_v\}_{v \in V}$ of Borel probability
measures on $\rbb_+$ and a system
$\{\varepsilon_v\}_{v \in V}$ of nonnegative real
numbers that satisfy \eqref{muu+} for every $u
\in V$.
   \end{cor}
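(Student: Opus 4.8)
The plan is to obtain this Corollary as an immediate specialization of Theorem \ref{main-0}. The entire content of the statement is already carried by that theorem; the one and only thing I would need to check is that the boundedness hypothesis $\slam \in \ogr{\ell^2(V)}$ forces the standing assumption $\escr \subseteq \quasi{\slam}$ of Theorem \ref{main-0}. Once that inclusion is in hand, the asserted equivalence is literally the equivalence (i)$\Leftrightarrow$(iii) of Theorem \ref{main-0}, read for the operator $\slam$.

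First I would verify that a bounded operator has every vector quasi-analytic, so that in fact $\quasi{\slam} = \ell^2(V)$. Since $\dz{\slam} = \ell^2(V)$ and $\slam$ maps $\ell^2(V)$ into itself, we have $\dzn{\slam} = \ell^2(V)$, so in particular $\escr \subseteq \dzn{\slam}$. For an arbitrary $f \in \ell^2(V)$ the submultiplicative estimate $\|\slam^n f\| \Le \|\slam\|^n \|f\|$ gives $\limsup_n \|\slam^n f\|^{\nicefrac 1n} \Le \|\slam\|$, whence there are $N \in \nbb$ and a constant $c>0$ with $\|\slam^n f\|^{\nicefrac 1n} \Le c$ for all $n \Ge N$. (Should $\|\slam^n f\| = 0$ for some $n$, then by the convention $\frac 10 = \infty$ the corresponding term of the series is $\infty$, and quasi-analyticity of $f$ is immediate.) Consequently each term $1/\|\slam^n f\|^{\nicefrac 1n}$ is bounded below by $1/c > 0$ for large $n$, so $\sum_{n=1}^\infty 1/\|\slam^n f\|^{\nicefrac 1n} = \infty$, i.e., $f \in \quasi{\slam}$. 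Thus $\escr \subseteq \quasi{\slam}$, and the hypothesis of Theorem \ref{main-0} is satisfied by $\slam$.

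Having established this, I would simply invoke Theorem \ref{main-0}: its equivalence (i)$\Leftrightarrow$(iii) says precisely that $\slam$ is subnormal if and only if there exist a system $\{\mu_v\}_{v \in V}$ of Borel probability measures on $\rbb_+$ and a system $\{\varepsilon_v\}_{v \in V}$ of nonnegative real numbers satisfying \eqref{muu+} for every $u \in V$, which is exactly the assertion of the Corollary. There is essentially no obstacle here; the only point demanding a moment's care is the quasi-analyticity bookkeeping above, where the possibility of a vanishing power $\slam^n f$ must be handled through the $\frac 10 = \infty$ convention, after which the argument reduces to the routine spectral-radius estimate for bounded operators.
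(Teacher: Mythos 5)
Your proposal is correct and follows exactly the route the paper intends: the paper presents this corollary as an immediate consequence of Theorem \ref{main-0}, the only point to check being that boundedness of $\slam$ forces $\escr \subseteq \quasi{\slam}$, which you verify via the standard estimate $\|\slam^n f\|^{\nicefrac 1n} \Le \|\slam\|\,\|f\|^{\nicefrac 1n}$. Your handling of the vanishing-power case through the convention $\frac 10 = \infty$ is a sensible piece of bookkeeping that the paper leaves implicit.
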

   \subsection*{Acknowledgements} A substantial part of
this paper was written while the second and the fourth
authors visited Kyungpook National University during
the autumn of 2010 and the spring of 2011. They wish
to thank the faculty and the administration of this
unit for their warm hospitality.
   \bibliographystyle{amsalpha}
   
   \end{document}